\newcommand{\R}{\mathbb{R}}
\newcommand{\pd}{\partial}
\newcommand{\beq}{\begin{equation}}
\newcommand{\eeq}{\end{equation}}
\newtheorem{thm}{Theorem}[section]
\newtheorem{lmm}[thm]{Lemma}
\newtheorem{rem}[thm]{Remark}
\newtheorem{defini}[thm]{Definition}
\renewcommand{\thefigure}{\thesection.\arabic{figure}}
\renewcommand{\thetable}{\thesection.\arabic{table}}
\DeclareMathOperator*{\argmin}{arg\,min}
\DeclareMathOperator*{\sgn}{sgn}
\title
{
Eventual linear convergence of the Douglas-Rachford iteration for basis pursuit
}
\author[]{Laurent Demanet\thanks{laurent@math.mit.edu} }
\author[]{Xiangxiong Zhang\thanks{zhangxx@math.mit.edu}}
\affil[]{Massachusetts Institute of Technology, Department of Mathematics, \\
 77 Massachusetts Avenue, Cambridge, MA 02139}
\begin{document}
\date{December 2012, revised May 2013 }
\maketitle

\vspace*{.10in}

%=============  title  =========================

\vspace{.3in}

\centerline{\bf Abstract}

\vspace{0.1in}

We provide a simple analysis of the Douglas-Rachford splitting algorithm in the
context of $\ell^1$ minimization with linear constraints, and quantify the
asymptotic linear convergence rate in terms of
principal angles between relevant vector spaces. 
In the compressed sensing setting, we show how to bound this rate in terms of the restricted isometry
constant. More general iterative schemes obtained by $\ell^2$-regularization
and over-relaxation including the dual split Bregman method \cite{LBSB} are also treated,
which answers the question how to choose the relaxation and soft-thresholding parameters to accelerate the asymptotic convergence rate.
We make no attempt at characterizing the
transient regime preceding the onset of linear convergence.

\vfill

\noindent {\bf Acknowledgments:}
The authors are grateful to Jalal Fadili, Stanley Osher, Gabriel Peyr\'{e}, Ming Yan, Yi Yang and Wotao Yin for discussions on modern methods of optimization that were very instructive to us. The authors are supported by the National Science Foundation and the Alfred P. Sloan Foundation.

\bigskip

\noindent {\bf Keywords:}
basis pursuit;  Douglas-Rachford; generalized Douglas-Rachford; Peaceman-Rachford; relaxation parameter; asymptotic linear convergence rate

\section{Introduction}
\setcounter{equation}{0}
\setcounter{figure}{0}
\setcounter{table}{0}

\subsection{Setup}

In this paper we consider certain splitting algorithms for basis pursuit \cite{Chen98atomicdecomposition}, the constrained optimization problem
\beq\label{BP}
\min \| x \|_1 \qquad \mbox{s.t.} \qquad Ax=b.
\eeq
Throughout this paper we consider $A \in \R^{m \times n}$ with $m \leq n$, and we assume that $A$ has full row rank. We also assume that the solution $x^*$ of (\ref{BP}) is unique.

In particular, we treat splitting algorithms that naturally arise in the scope of minimization problems of the form
\[
\min_x f(x) + g(x),
\]
where $f$ and $g$ are convex, lower semi-continuous (but not otherwise smooth), and have simple resolvents of their subdifferentials
\[
J_{\gamma F}=(\mathrm{I}+\gamma F)^{-1}, \qquad J_{\gamma G}=(\mathrm{I}+\gamma G)^{-1},
\]
where $F = \pd f(x)$ and $G = \pd g(x)$ are the respective subdifferentials of $f$ and $g$ at $x$. In those terms, $x$ is a minimizer if and only if $0 \in F(x) + G(x)$. Resolvents are also often called proximal operators, as they obey $J_{\gamma F}(x)=\argmin_z \gamma f(z) + \frac12\|z-x\|^2$. In the case of basis pursuit, it is well known that
\begin{itemize}
\item $f(x) = \| x \|_1$ and $g(x) = \iota_{\{x:Ax=b\}}$, the indicator function equal to zero when $Ax=b$ and $+\infty$ otherwise;
\item $J_{\gamma F}$ is soft-thresholding (shrinkage) by an amount $\gamma$,
\[
J_{\gamma F}(x)_i = S_\gamma(x)_i = \sgn(x_i)\max \{|x_i|-\gamma,0\};
\]
\item $J_{\gamma G}$ is projection onto the set $Ax=b$, namely
\[
J_{\gamma G}(x) =\mathrm P(x)=x+A^+(b-Ax),
\]
with $A^+=A^T(AA^T)^{-1}$ denoting the pseudo inverse.
\end{itemize}
The simplest splitting algorithm based on the resolvents is
\[
x^{k+1} = J_{\gamma F} J_{\gamma G} x^k.
\]
This iteration is successful in the special case when $f$ and $g$ are both indicators of convex sets, but does not otherwise generally enjoy good convergence properties. Instead, one is led to consider reflection operators $R_{\gamma F} = 2 J_{\gamma F} - I$, $R_{\gamma G} = 2 J_{\gamma G} - I$, and write the \emph{Douglas-Rachford splitting} \cite{lions, Eckstein92onthe}
\beq\label{alg1} \begin{cases}
y^{k+1} = \frac12 ( R_{\gamma F} R_{\gamma G} + I ) y^k=J_{\gamma F}\circ(2J_{\gamma G}-I)y^k+(I-J_{\gamma G})y^k, \\
x^{k+1} = J_{\gamma G} y^{k+1},
\end{cases}\eeq
where $I$ is the identity. The operator $T_\gamma = \frac12 ( R_{\gamma F} R_{\gamma G} + I )$ is \textit{firmly non-expansive} 
regardless of $\gamma > 0$
\cite{lions}. Thus ${y}^k$ converges to one of its fixed points ${y}^*$. Moreover, $x^*=J_{\gamma G}({y}^*)$ is one solution to $0\in F({x})+G({x})$.

For general convex functions $f(x)$ and $g(x)$, the sublinear convergence rate  $\mathcal{O}(1/k)$  of the algorithm (\ref{alg1}) was
proven for averages of iterates in \cite{Chambolle:2011:FPA:1968993.1969036, doi:10.1137/110836936}. 
The firm non-expansiveness also implies $\|y^k-y^{k-1}\|^2\leq \frac{1}{k}\|y^{0}-y^*\|^2$, see Appendix \ref{appendix1}.
Convergence questions for the Douglas-Rachford splitting were recently studied in the context of projections onto possibly nonconvex sets \cite{Borwein, HesseLuke} with potential
applications to phase retrieval \cite{Bauschke:02}.

In the case of basis pursuit, we note that the Douglas-Rachford (DR) iteration takes the form
\begin{equation}\label{alg2}
\begin{cases} {y}^{k+1}=  S_\gamma(2 x^k -y^k) +y^k -x^k,\\
{x}^{k+1}={y}^{k+1}+A^+(b-Ay^{k+1})\end{cases}.
\end{equation}

\subsection{Main result}

In practice, (\ref{alg2}) often settles into a regime of linear convergence. See Figure \ref{ex1} for an illustration of a typical error curve where the matrix ${A}$ is a $3\times 40$ random matrix and $x^*$ has three
nonzero components. Notice that the error $\|y^k-y^*\|$ is monotonically decreasing since the operator $T_\gamma$ is non-expansive. The same cannot be said of $\|x^k-x^*\|$.

In this example, the regime of linear convergence was reached quickly for the $y^k$. That may not in general be the case, particularly if $AA^T$ is ill-conditioned. Below, we provide the characterization of the error decay rate in the linear regime. To express the result, we need the following notations.

Assume that the unique solution $x^*$ of (\ref{BP}) has $r$ zero components.   Let $e_i$ ($i=1,\cdots,n$) be the standard basis in $\mathbbm{R}^n$. Denote the basis vectors corresponding to zero components in
${x}^*$ as $e_j$ ($j=i_1,\cdots,i_r$). Let ${B}$ be the $r \times n$ selector of
the zero components of ${x}^*$, i.e., $B =[e_{i_1},\cdots,e_{i_r}]^T$.  Let $\mathcal{N}({A})=\{{x}:{Ax}=0\}$ denote the nullspace of $A$ and $\mathcal{R}({A^T})=\{{x}:x={A^T z}, z\in \mathbbm{R}^m\}$ denote the range of $A^T$.

Then, for the numerical example discussed earlier, the slope of  $\log \|y^k-y^*\|$ as a function of $k$ is $\log\left(\cos\theta_1\right)$ for large $k$, where $\theta_1$ is the first principal angle between $\mathcal{N}(A)$ and $\mathcal{N}(B)$.
See Definition \ref{def1} in Section \ref{subsection23} for principal angles between subspaces.

\begin{figure}[ht]
  \begin{center}
\includegraphics[scale=0.5]{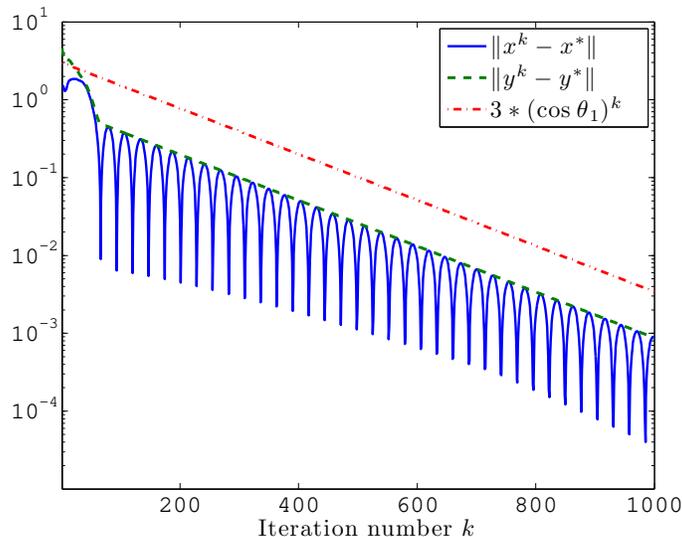}
  \end{center}
  \caption{A typical error curve for Douglas-Rachford}
  \label{ex1}
\end{figure}

Our main result is that the rate of decay of the error is indeed $\cos \theta_1$ for a large class of situations that we call \emph{standard}, in the sense of the following definition.

\begin{defini}
Consider a basis pursuit problem $(b,A)$ with solution $x^*$. Consider $y^0$ an initial value for the Douglas-Rachford iteration, and $y^* = \lim_{k \to \infty} T_\gamma^k y^0$.

Consider the preimage of the soft thresholding of all vectors with the same signs as $x^*$:
$$\mathcal{Q}=\{S_\gamma^{-1}(x): \sgn(x)=\sgn(x^*)\}
=Q_1\otimes Q_2\otimes\cdots Q_n,$$
where $$Q_j=\begin{cases}
            (\gamma,+\infty), & \mbox{if } x^*_j>0 \\
            (-\infty,-\gamma), & \mbox{if } x^*_j<0 \\
            [-\gamma,\gamma], & \mbox{otherwise}
           \end{cases}.$$
We call $(b,A; y^0)$ a \emph{standard} problem for the Douglas-Rachford iteration if $\mathrm R(y^*)$ belongs to the interior of $\mathcal{Q}$, where $\mathrm R$ is the reflection operator defined earlier. In that case, we also say that the fixed point $y^*$ of $T_\gamma$ is an \emph{interior fixed point}. Otherwise, we say that $(b,A; y^0)$ is nonstandard for the Douglas-Rachford iteration, and that $y^*$  is a \emph{boundary fixed point}.
\end{defini}

\begin{thm}
\label{thm1}
Consider $(b,A; y^0)$ a standard problem for the Douglas-Rachford iteration, in the sense of the previous definition. Then the Douglas-Rachford iterates $y^k$ obey
\[
\| y^k - y^* \| \leq C \left( \cos \theta_1 \right)^k,
\]
where $C$ may depend on $b$, $A$ and $y^0$ (but not on $k$), and $\theta_1$ is
the leading principal angle between $\mathcal{N}(A)$ and $\mathcal{N}(B)$.
\end{thm}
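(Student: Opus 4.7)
My plan is to reduce the nonlinear Douglas--Rachford iteration to an exact linear iteration involving the orthogonal projections onto $\mathcal{N}(A)$ and $\mathcal{N}(B)$, and then invoke the subspace spectral estimate (to be developed via principal angles in Section \ref{subsection23}).

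First I would argue that the iteration \emph{linearizes} after finitely many steps. The general DR theory (firm non-expansiveness of $T_\gamma$) gives $y^k\to y^*$, hence $x^k=J_{\gamma G}(y^k)\to x^*$ and $R(y^k)=2x^k-y^k\to R(y^*)$. The standardness hypothesis places $R(y^*)$ in the open set $\mathrm{int}(\mathcal{Q})$, so there exists $K_0$ with $R(y^k)\in\mathrm{int}(\mathcal{Q})$ for every $k\ge K_0$. On $\mathrm{int}(\mathcal{Q})$, soft-thresholding is affine: $S_\gamma(z)=P_M z-\gamma\,\sgn(x^*)$, where $P_M$ is the orthogonal projection onto $M:=\mathcal{N}(B)$. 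Combining with the affine identity $x^k-x^*=P_N(y^k-y^*)$, where $P_N$ is the projection onto $N:=\mathcal{N}(A)$ (from $J_{\gamma G}(y)=(I-A^+A)y+A^+b$), the differences $u^k:=y^k-y^*$ satisfy, for every $k\ge K_0$,
\[
u^{k+1}=T_{\mathrm{lin}}\,u^k,\qquad T_{\mathrm{lin}}:=\tfrac12\bigl((2P_M-I)(2P_N-I)+I\bigr)=I-P_M-P_N+2P_M P_N.
\]

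Second, I would localize the iteration to the invariant subspace on which $T_{\mathrm{lin}}$ is genuinely contractive. A short computation gives $\mathrm{Fix}(T_{\mathrm{lin}})=(M\cap N)\oplus(M+N)^\perp$, and both $(M+N)^\perp$ and $M+N$ are $T_{\mathrm{lin}}$-invariant. The uniqueness of $x^*$ then forces $M\cap N=\{0\}$: any nonzero $h\in M\cap N$ is supported on $\mathrm{supp}(x^*)$ with $Ah=0$, so for small $|t|$ the perturbation $x^*+th$ is feasible with $\|x^*+th\|_1=\|x^*\|_1+t\langle\sgn(x^*),h\rangle$; minimality forces the linear term to vanish, making $x^*+th$ another minimizer and contradicting uniqueness. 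Thus $\mathrm{Fix}(T_{\mathrm{lin}})=(M+N)^\perp$. Decomposing $u^{K_0}=u_1+u_2$ along $(M+N)^\perp\oplus(M+N)$ and using invariance, $u^{K_0+k}=u_1+T_{\mathrm{lin}}^k u_2$; since $u^{K_0+k}\to 0$ and $T_{\mathrm{lin}}^k u_2\perp u_1$, taking the inner product with $u_1$ forces $u_1=0$. Hence $u^{K_0}\in M+N$ and the tail of the orbit stays there.

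Finally, I would invoke the subspace spectral estimate $\|T_{\mathrm{lin}} v\|\le\cos\theta_1\,\|v\|$ for $v\in M+N$, where $\theta_1$ is the leading principal angle between $N$ and $M$. Iterating gives $\|u^{K_0+k}\|\le(\cos\theta_1)^k\|u^{K_0}\|$, and absorbing the prefactor into $C:=\|u^{K_0}\|(\cos\theta_1)^{-K_0}$ yields the stated bound. The main obstacle is this spectral estimate: obtaining the \emph{precise} contraction factor $\cos\theta_1$ (rather than a cruder bound) requires a CS/SVD-type simultaneous decomposition of $P_M$ and $P_N$ that exhibits the principal angles, which is the natural subject of Section \ref{subsection23}. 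A secondary subtlety is that the linearization in the first step must be \emph{exact} for all $k\ge K_0$, not merely to leading order; this rests on the openness of $\mathrm{int}(\mathcal{Q})$ together with the pre-established convergence $y^k\to y^*$, which together stabilize the thresholding pattern along the tail of the orbit.
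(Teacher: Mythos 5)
Your proposal is correct and follows essentially the same route as the paper: linearize $T_\gamma$ once $\mathrm R(y^k)$ enters $\mathrm{int}(\mathcal Q)$, identify the resulting matrix $P_MP_N+P_{M^\perp}P_{N^\perp}$ (the paper's $\mathbf T=(I-B^+B)(I-A^+A)+B^+BA^+A$), and bound its norm on the invariant complement of its fixed subspace by $\cos\theta_1$ via the principal-angle (CS/SVD) decomposition developed in Section \ref{subsection23}. Your argument that the $(M+N)^\perp$-component of $y^{K_0}-y^*$ vanishes (taking inner products of the convergent tail against that component) is a slightly cleaner variant of the paper's identification of $y^*$ as the projection of $y^K$ onto the fixed-point set, but the substance is the same.
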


The auxiliary variable $y^k$ in (\ref{alg2}) converges linearly for
sufficiently large $k$, thus  $x^k$ is also bounded by a linearly
convergent sequence since $\|x^k-x^*\|=\|\mathrm P(y^k)-\mathrm P(y^*)\|=
\|\mathrm P(y^k-y^*)\|\leq\|y^k-y^*\|$.

Intuitively, convergence enters the linear regime when the support of the iterates essentially matches that of $x^*$. By essentially, we mean that there is some technical consideration (embodied in our definition of a ``standard problem") that this match of supports is not a fluke and will continue to hold for all iterates from $k$ and on. When this linear regime is reached, our analysis in the standard case hinges on the simple fact that $T_\gamma(y^k)-y^*$ is a linear transformation on $y^k-y^*$ with an eigenvalue of maximal modulus equal to $\cos \theta_1$.

In the nonstandard case ($y^*$ being a boundary fixed point), we furthermore show that the rate of convergence for $y^k$ is \emph{generically} of the form $\cos \bar  \theta_1$, where $0<\bar  \theta_1 \leq \theta_1$ is the leading principal angle between $\mathcal{N}(A)$ and $\mathcal{N}(\bar B)$, with $\bar B$ a submatrix of $B$ depending on $y^*$.
Nongeneric cases are not a priori excluded by our analysis, but have not been observed in our numerical tests. See Section \ref{subsection25} for a discussion of the different types of nonstandard cases.

\subsection{Regularized basis pursuit}

In practice, if $\theta_1$ is very close to zero, linear convergence with rate $ \cos \theta_1 $ might be very slow.  The following regularized problem is often used to accelerate convergence,
\begin{equation}
\label{BP2} \quad \min_{{x}}\left\{\|{x}\|_1+\frac{1}{2\alpha}\|x\|^2: Ax=b\right\}.
\end{equation}
It is proven in \cite{Yin:2010:AGL:2078411.2078424} that there exists a
$\alpha_\infty$
such that the solution of (\ref{BP2}) with $\alpha\geq \alpha_\infty$ is the
solution
of (\ref{BP}). See \cite{yin2} for more discussion of $\alpha_\infty$. For the rest of this paper, we assume $\alpha$ is taken large enough so that
$\alpha\geq \alpha_\infty$.

For all the discussion regarding $\ell^2$-regularized basis pursuit, it is convenient to make the technical assumption that  $\theta_1\leq \frac{\pi}{4}$. Notice that regularization is probably unwarranted in the event $\theta_1 > \pi /4$, since $\cos\theta_1$ would be a very decent linear convergence rate.

%Since $\cos\theta_1$ would be a very decent linear convergence rate for large $\theta_1$, for all the discussion regarding $\ell^2$-regularized basis pursuit, we assume that  $\theta_1\leq \frac{\pi}{4}$.

In particular, the Douglas-Rachford splitting (\ref{alg1}) with $f(x)=\|x\|_1+\frac{1}{2\alpha}\|x\|^2$ and $g(x)=\iota_{\{x:Ax=b\}}$ is equivalent to the dual split Bregman method for basis pursuit \cite{LBSB}, which will be discussed in Section \ref{seclbsb}.

\begin{figure}[ht]
  \begin{center}
\subfigure{\includegraphics[scale=0.6]{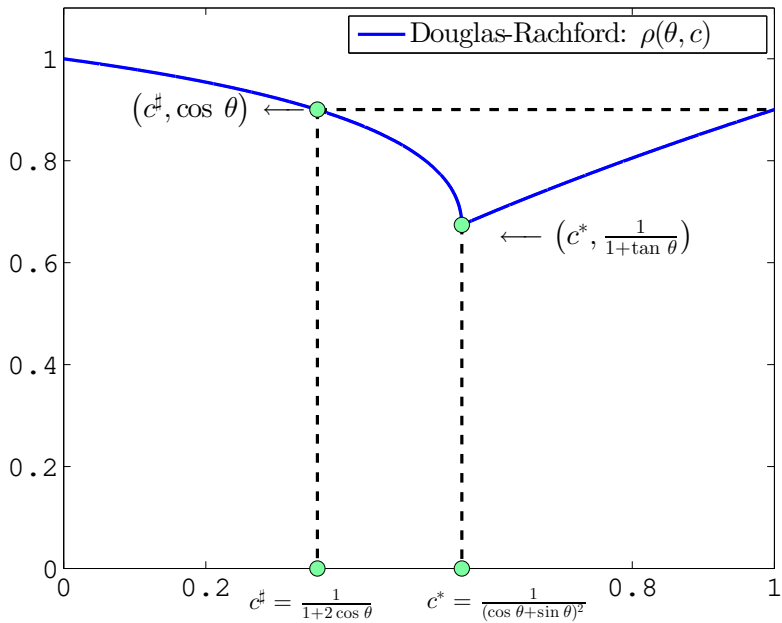}}
  \end{center}
  \caption{An illustration of the rate of linear convergence for Douglas-Rachford splitting on $\ell^2$-regularized basis pursuit: $\rho(\theta,c)$ for a fixed $\theta$. The vertical axis is $\rho(\theta,c)$ and the horizontal axis is $c=\frac{\alpha}{\alpha+\gamma}$. The case $\alpha = +\infty$ (unregularized DR) is at $c = 1, \rho = \cos \theta$.}
  \label{bestc}
\end{figure}

With the same assumptions and notations as in Theorem \ref{thm1}, assuming $\theta_1\leq\frac{\pi}{4}$, for the Douglas-Rachford splitting (\ref{alg1}) on (\ref{BP2}),  we have
$\| y^k - y^* \| \leq C \rho(\theta_1,c) ^k$ where $c=\frac{\alpha}{\alpha+\gamma}$ and
\[
 \rho(\theta,c)=\begin{cases}
\sqrt{c}\cos\theta, & \mbox{if } c\geq\frac{1}{(\cos\theta+\sin\theta)^2}\\
\frac12\left(c\cos(2\theta)+1+\sqrt{\cos^2(2\theta)c^2-2c+1} \right)& \mbox{if }
c\leq \frac{1}{(\cos\theta+\sin\theta)^2}
                   \end{cases}.
\]

Let $c^*=\frac{1}{(\cos\theta_1+\sin\theta_1)^2}$ which is equal to
$\argmin_c\rho(\theta_1,c)$. Let $c^\sharp=\frac{1}{1+2\cos\theta_1}$
which is the solution to $\rho(\theta_1,c)=\cos\theta_1$. See Figure
\ref{bestc}. Then for any $c\in (c^\sharp,1)$, we have
$\rho(\theta_1,c)<\cos\theta_1
$. The asymptotic convergence rate of (\ref{alg1}) on (\ref{BP2}) is faster than
(\ref{alg2}) if $\frac{\alpha}{\alpha+\gamma}\in (c^\sharp,1)$.
The best achievable asymptotic convergence rate is
$\rho(\theta_1,c^*)=\sqrt{c^*}\cos\theta_1=\frac{\cos\theta_1}{
\cos\theta_1+\sin\theta_1} = \frac{1}{1 + \tan \theta_1}$ when $\frac{\alpha}{\alpha+\gamma}=c^*$.

\subsection{Generalized Douglas-Rachford and Peaceman-Rachford}

The generalized Douglas-Rachford splitting introduced in \cite{Eckstein92onthe} can be written as
\beq
\label{GDR}
y^{k+1}=(1-\lambda_k) y^k+\lambda_k\frac{R_{\gamma F}R_{\gamma  G}+I}{2}y^k,\quad \lambda_k\in(0,2),
\eeq
We have the usual DR splitting when $\lambda_k = 1$. In the limiting case $\lambda_k = 2$, (\ref{GDR}) becomes the Peaceman-Rachford (PR) splitting \beq
\label{PR}y^{k+1}=R_{\gamma  F}R_{\gamma  G}y^k.\eeq

Consider (\ref{GDR}) with constant relaxation parameter $\lambda\in(0,2]$ on (\ref{BP2}). 
With the same assumptions and notations as in Theorem \ref{thm1}, assuming $\theta_1\leq\frac{\pi}{4}$,  we have
the eventual linear convergence rate
$\| y^k - y^* \| \leq C \rho(\theta_1,c,\lambda) ^k$ where $c=\frac{\alpha}{\alpha+\gamma}$ and
\[
 \rho(\theta,c,\lambda)=\begin{cases}
\sqrt{c\sin^2\theta\lambda^2-(1-c\cos(2\theta))\lambda+1}, & \mbox{if } c\geq\frac{1}{(\cos\theta+\sin\theta)^2}\\
\frac12\left(\lambda c\cos(2\theta)-\lambda+2+\lambda\sqrt{\cos^2(2\theta)c^2-2c+1} \right)& \mbox{if }
c\leq \frac{1}{(\cos\theta+\sin\theta)^2}
                   \end{cases}.
\]

For fixed $\theta$ and $c$, the optimal relaxation parameter is
\[\lambda^*(\theta,c)=\argmin\limits_\lambda\rho(\theta,c,\lambda)=\begin{cases}2  & \mbox{if } c\leq
\bar c= \frac{1}{2-\cos(2\theta)} \\  \frac{\frac{1}{c}-\cos{2\theta}}{1-\cos{(2\theta)}} & \mbox{if } c\geq \bar c \end{cases},\]
which is a continuous non-increasing function with respect to $c$ and has range $(1,2]$ for $c\in(0,1)$.

%The convergence rate with $\lambda^*$ is
%\[|\rho(\theta,c,\lambda^*)|=\begin{cases}|\rho(\theta,c,2)|=c\cos(2\theta)+\sqrt{\cos^2(2\theta)c^2-2c+1}, & \mbox{if } c\leq
%c^*=\frac{1}{(\cos\theta+\sin\theta)^2}\\ 
%|\rho(\theta,c,2)|=\sqrt{2c-1}, & \mbox{if } c^*\leq c\leq \bar c= \frac{1}{2-\cos(2\theta)}\\
%|\rho(\theta,c,\frac{1-c\cos{2\theta}}{c(1-\cos{2\theta})})|=\frac{\sqrt{2c-1-c^2\cos^2{(2\theta)}}}{2\sin\theta\sqrt{c}},
%& \mbox{if } c\geq \bar c
%\end{cases}.\]

The convergence rate at the optimal $\lambda = \lambda^*$ is
\[\rho(\theta,c,\lambda^*)=\begin{cases}c\cos(2\theta)+\sqrt{\cos^2(2\theta)c^2-2c+1}, & \mbox{if } c\leq
c^*=\frac{1}{(\cos\theta+\sin\theta)^2}\\ 
\sqrt{2c-1}, & \mbox{if } c^*\leq c\leq \bar c= \frac{1}{2-\cos(2\theta)}\\
\frac{\sqrt{2c-1-c^2\cos^2{(2\theta)}}}{2\sin\theta\sqrt{c}},
& \mbox{if } c\geq \bar c
\end{cases}.\]

See Figure \ref{bestc2} for the illustration of the asymptotic linear rate $\rho(\theta,c,\lambda)$.
Several interesting facts can be seen immediately:

\begin{enumerate}
\item For Peaceman-Rachford splitting, i.e., (\ref{GDR}) with $\lambda=2$, if $c\geq c^*$, the asymptotic rate $\rho(\theta,c,2)=\sqrt{2c-1}$ is independent of $\theta$.
\item For any  $c< \tilde c=\frac{1}{2-\cos^2\theta}$, we have $\rho(\theta,c,2)
<\rho(\theta,c,1)$, i.e., the Peaceman-Rachford splitting has a better convergence rate than Douglas-Rachford.
\item The best possible rate of (\ref{GDR}) is $\min\limits_{c,\lambda}\rho(\theta,c,\lambda)=\rho(\theta,c^*,2)=\frac{1-\tan\theta}{1+\tan\theta}$.
\end{enumerate}

\begin{figure}[ht]
  \begin{center}
\subfigure{\includegraphics[scale=0.6]{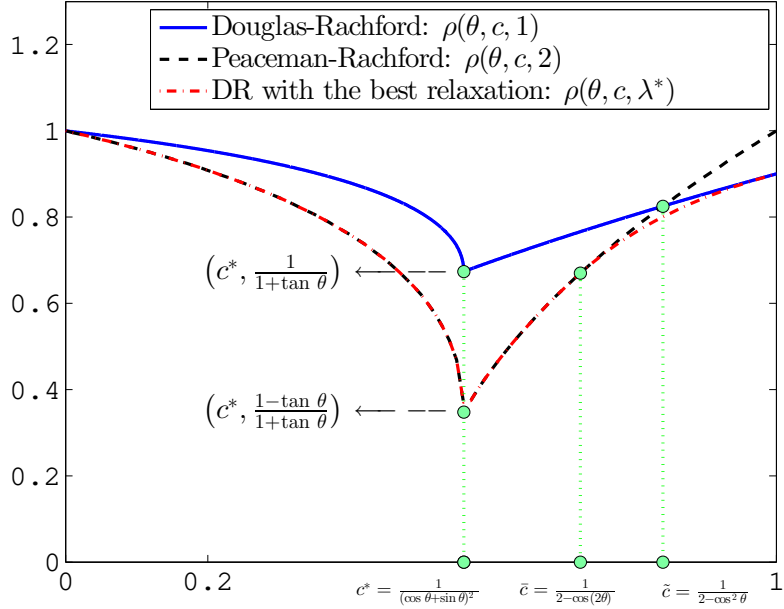}}
  \end{center}
  \caption{An illustration of eventual linear convergence rate for generalized Douglas-Rachford splitting with constant relaxation parameter $\lambda$ on $\ell^2$-regularized basis pursuit: $\rho(\theta,c,\lambda)$ for a fixed $\theta$. The vertical axis is $\rho(\theta,c,\lambda)$ and the horizontal axis is $c=\frac{\alpha}{\alpha+\gamma}$. For $c\leq \bar c$,
the best relaxation parameter is $\lambda^*=2$. }
  \label{bestc2}
\end{figure}

\subsection{Context}

There is neither strong convexity nor Lipschitz continuity in the objective function of (\ref{BP})
even locally around $x^*$, 
but any $x^k$ with the same support as $x^*$
lies on a low-dimensional manifold, on which the objective function $\|x\|_1$ is smooth. Such property is characterized as \textit{partial smoothness} \cite{Lewis:2002:ASN:588908.589340}.  In other words, it is not surprising that nonsmooth optimization algorithms for (\ref{BP})
converge linearly if $x^k$ has the correct support. For example, see \cite{hale2008fixed, Yin2013}.

The main contribution of this paper is the quantification of the asymptotic linear convergence rate 
for Douglas-Rachford splitting on basis pursuit.
It is well-known that Douglas-Rachford on the dual problem is the same as the alternating direction method of multipliers (ADMM) \cite{gabay83}, which is also equivalent to split Bregman method \cite{Goldstein:2009:SBM:1658384.1658386}. Thus the analysis in this paper also applies to ADMM on the dual problem of $\ell^2$-regularized basis pursuit, i.e., the dual split Bregman method for basis pursuit \cite{LBSB}.
By analyzing the generalized Douglas-Rachford introduced in \cite{Eckstein92onthe} including the Peaceman-Rachford splitting, we obtain
the explicit dependence of the asymptotic convergence rate on the parameters.

\subsection{Contents}
Details and proof of the main result will be shown in Section 2. In Sections 3, we apply the same methodology to obtain the asymptotic convergence rates for Douglas-Rachford, generalized Douglas-Rachford and Peaceman-Rachford splittings on the $\ell^2$-regularized basis pursuit.
In Section 4, we discuss the equivalence between Douglas-Rachford and dual split Bregman method, and their practical relevance.  Numerical experiments illustrating the theorems are shown. 
\section{Douglas-Rachford for Basis Pursuit}
\setcounter{equation}{0}
\setcounter{figure}{0}
\setcounter{table}{0}
\label{sec2}

\subsection{Preliminaries}

For any subspace $\mathcal{X}$ in $\mathbbm{R}^n$, we use
$\mathbbm{P}_\mathcal{X}(z)$ to
denote the orthogonal projection onto $\mathcal{X}$ of the point $z\in \mathbbm{R}^n.$

In this section, we denote $F(x)= \partial \|{x}\|_1$, $G(x)=\partial
\iota_{\{x:Ax=b\}}$,
 and
the resolvents are $J_{\gamma F}(x)=S_\gamma(x)$ and $J_{\gamma
G}(x)=\mathrm P(x)=x+A^+(b-Ax)$.
For convenience, we use $\mathrm R=2 \mathrm P -\mathrm I$ to denote reflection about $Ax=b$, i.e.,
$\mathrm R(x)=x+2A^+(b-Ax).$ It is easy to see that $\mathrm R$ is idempotent. Then $T_\gamma=S_{\gamma} \circ \mathrm R +\mathrm{I}-\mathrm P$.

Let $N(x^*)$ denote the set of coordinate indices associated with the nonzero
components of $x^*$,
namely, $N(x^*)\cup \{i_1,\cdots, i_r\}=\{1,\cdots,n\}$. Recall the definition
of $\mathcal{Q}$ in the previous section. Then for any $z\in \mathcal{Q}$,
the soft thresholding operator can be written as
$S_\gamma(z)=z-\gamma\sum\limits_{j\in N(x^*)}\sgn(x^*_j)e_j-B^+B z$.

\begin{lmm}
 \label{lmm0}
The assumption that $x^*$ is the unique minimizer of (\ref{BP}) implies
 $\mathcal{N}(A)\cap \mathcal{N
}(B)=\{\mathbf{0}\}$.
\end{lmm}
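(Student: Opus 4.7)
The plan is to prove the contrapositive: if there is a nonzero vector $v \in \mathcal{N}(A) \cap \mathcal{N}(B)$, then $x^*$ cannot be the unique minimizer of (\ref{BP}). The key observation is that $\mathcal{N}(B)$ consists exactly of those vectors supported on $N(x^*)$, since $B$ projects onto the coordinates where $x^*$ vanishes. Thus any candidate $v$ in the intersection is supported on the indices where $x^*$ is strictly nonzero, and simultaneously lies in the null space of $A$.

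Given such a $v \neq 0$, the first step is to construct a family of feasible perturbations. Since $v_i = 0$ for $i \notin N(x^*)$ and $x^*_i \neq 0$ for $i \in N(x^*)$, one can choose $\epsilon_0 > 0$ small enough that for all $|\epsilon| \leq \epsilon_0$, the vector $x^* + \epsilon v$ has exactly the same sign pattern as $x^*$ on $N(x^*)$ and is still zero on the complementary indices. Moreover $A(x^* + \epsilon v) = A x^* = b$, so feasibility is preserved.

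The second step is to evaluate the $\ell^1$ norm explicitly. Using the preserved sign pattern,
\[
\|x^* + \epsilon v\|_1 = \sum_{i \in N(x^*)} \sgn(x^*_i)(x^*_i + \epsilon v_i) = \|x^*\|_1 + \epsilon \langle \sgn(x^*), v \rangle,
\]
where $\sgn(x^*)$ denotes the vector of coordinate-wise signs. The dependence on $\epsilon$ is therefore affine.

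The final step is a two-case dichotomy. If $\langle \sgn(x^*), v \rangle \neq 0$, picking $\epsilon$ with the opposite sign yields a feasible point with strictly smaller $\ell^1$ norm, contradicting the fact that $x^*$ is a minimizer of (\ref{BP}). If instead $\langle \sgn(x^*), v \rangle = 0$, then $\|x^* + \epsilon v\|_1 = \|x^*\|_1$ for all $|\epsilon| \leq \epsilon_0$, so $x^* \pm \epsilon v$ are distinct minimizers, contradicting uniqueness. Either way we obtain a contradiction, proving $\mathcal{N}(A) \cap \mathcal{N}(B) = \{\mathbf{0}\}$. No step is really an obstacle here; the only thing to be careful about is making sure $\epsilon_0$ is chosen small enough that the sign pattern is preserved, which is immediate from the strict nonzeroness of $x^*$ on its support.
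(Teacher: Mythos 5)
Your proof is correct and follows essentially the same route as the paper: both exploit that a nonzero $v\in\mathcal{N}(A)\cap\mathcal{N}(B)$ is supported on the support of $x^*$, so that $\|x^*+\epsilon v\|_1=\|x^*\|_1+\epsilon\,\sgn(x^*)^Tv$ for small $\epsilon$, and then derive a contradiction whether $\sgn(x^*)^Tv$ is zero (non-unique minimizers) or nonzero (a strictly better feasible point). Your two-case presentation is, if anything, a slightly cleaner organization of the paper's argument.
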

\begin{proof}
Suppose there exists a nonzero vector $z\in\mathcal{N}(A)\cap \mathcal{N
}(B)$. 
For any $\varepsilon\in \mathbbm{R}$ with small magnitude, 
we have $\sgn(x^*+\varepsilon z)^T=\sgn(x^*)^T$ and $A(x^*+\varepsilon z)=b$.
For nonzero small $\varepsilon$, the uniqueness of the minimizer implies 
$\|x^*\|_1<\|x^*+\varepsilon z\|_1=\sgn(x^*+\varepsilon z)^T(x^*+\varepsilon z)=
\sgn(x^*)^T(x^*+\varepsilon z)=\|x^*\|_1+\varepsilon \sgn(x^*)^Tz$. 
Thus $\sgn(x^*)^Tz\neq 0$.

 On the other hand,
for the function $h(\varepsilon)=\|x^*+\varepsilon z\|_1=\|x^*\|_1+\varepsilon \sgn(x^*)^Tz$ on a small neighborhood of $\varepsilon=0$,
the minimum of $h(\varepsilon)$ is $h(0)$, thus $\sgn(x^*)^Tz=h'(0)=0$.
This contradicts with the fact that $\sgn(x^*)^Tz\neq 0$.
\end{proof}

The sum of the dimensions of $\mathcal{N}(A)$ and $\mathcal{N}(B)$ should be
no larger than $n$
since $\mathcal{N}(A)\cap \mathcal{N
}(B)=\{\mathbf{0}\}$. Thus, $n-m+n-r\leq n$ implies $m\geq n-r.$

$\mathcal{N}(A)\cap\mathcal{N}(B)=\{\mathbf{0}\}$ also implies the orthogonal
complement of
the subspace spanned by $\mathcal{N}(A)$ and $\mathcal{N}(B)$ is
$\mathcal{R}(A^T)\cap \mathcal{R}(B^T)$.
Therefore, the dimension of $\mathcal{R}(A^T)\cap \mathcal{R}(B^T)$ is $m+r-n$.

\subsection{Characterization of the fixed points of $T_\gamma$}
Since $\partial \iota_{\{x:{Ax}={b}\}}=\mathcal{R}(A^T)$, the first order
optimality condition for (\ref{BP}) reads $0\in
\partial\|x^*\|_1+\mathcal{R}(A^T)$, thus
$\partial\|x^*\|_1\cap\mathcal{R}(A^T)\neq\varnothing$.
Any such $\eta\in
\partial\|x^*\|_1\cap\mathcal{R}(A^T)$ is called a dual certificate.

We have the following characterization of the fixed points of $T_\gamma$.

\begin{lmm}
\label{lemma1}
 The set of the fixed points of  $T_\gamma$ can be described as
$$\{y^*: y^*=x^*-\gamma \eta, \eta\in
\partial\|x^*\|_1\cap\mathcal{R}(A^T)\}.$$
Moreover, for any two fixed
points $y^*_1$ and $y^*_2$, we have $y^*_1-y^*_2, \mathrm R(y^*_1)-\mathrm
R(y^*_2)\in \mathcal{R}({A^T})\cap\mathcal{R}({B^T})$.
Thus
there is a unique fixed point $y^*$ if and only if $\mathcal{R}(A^T)\cap
\mathcal{R}(B^T)=\{\mathbf{0}\}$.
\end{lmm}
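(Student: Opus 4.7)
The plan is to establish the three assertions of the lemma in sequence.

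\emph{Fixed-point characterization.} Using the expression $T_\gamma=S_\gamma\circ\mathrm R+\mathrm I-\mathrm P$, the fixed-point equation $T_\gamma y^*=y^*$ reduces to $S_\gamma(\mathrm R y^*)=\mathrm P y^*$. Setting $x:=\mathrm P y^*$ automatically gives $Ax=b$, and the resolvent identity ``$J_{\gamma F}(z)=w$ iff $z-w\in\gamma\partial f(w)$'' shows the equation is equivalent to $y^*=x-\gamma\eta$ for some $\eta\in\partial\|x\|_1$. Since $x-y^*=A^+(b-Ay^*)\in\mathcal R(A^T)$, one also has $\eta\in\mathcal R(A^T)$, making $\eta$ a dual certificate; together with the standing uniqueness of $x^*$ this forces $x=x^*$. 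The converse implication follows from a short direct computation: for $\eta=A^Tv$, the ansatz $y=x^*-\gamma\eta$ satisfies $\mathrm P y=x^*$, $\mathrm R y=x^*+\gamma\eta$, and $S_\gamma(x^*+\gamma\eta)=x^*$, whence $T_\gamma y=y$.

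\emph{Difference of fixed points.} Writing $y_i^*=x^*-\gamma\eta_i$, the difference $y_1^*-y_2^*=-\gamma(\eta_1-\eta_2)$ lies in $\mathcal R(A^T)$ since each $\eta_i$ does, and in $\mathcal R(B^T)$ because both subgradients $\eta_i$ must equal $\sgn(x^*_j)$ on every $j\in N(x^*)$, leaving the difference supported on the zero-indices $\{i_1,\dots,i_r\}$. The same argument applies to $\mathrm R y_i^*=x^*+\gamma\eta_i$.

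\emph{Biconditional.} The ``if'' direction follows at once from the previous step. The converse---``if the fixed point is unique then $\mathcal R(A^T)\cap\mathcal R(B^T)=\{\mathbf 0\}$''---is the main obstacle, and crucially invokes the uniqueness of $x^*$. My plan is contrapositive: given $w\neq 0$ in $\mathcal R(A^T)\cap\mathcal R(B^T)$ and any $\eta^*\in D:=\partial\|x^*\|_1\cap\mathcal R(A^T)$, I would try to produce a second dual certificate of the form $\eta^*+tw$. It stays in $\mathcal R(A^T)$ and preserves the $N(x^*)$-components (since $w$ is supported on the zero-indices), so the only constraint is that $|\eta^*_i+tw_i|\leq 1$ on those indices for some $t\neq 0$ of one sign. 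If no such $t$ exists, then at the saturated constraints the blocking must be two-sided: $w_i\eta^*_i>0$ and $w_j\eta^*_j<0$ for some $i,j$. A Gordan/Farkas separation applied to the orthant picked out by the signs of $\eta^*$ on the zero-indices and the subspace $B(\mathcal R(A^T)\cap\mathcal R(B^T))\subset\mathbb R^r$ then produces a nonzero $z\in\mathcal N(A)$ supported on $N(x^*)$ together with the saturated zero-indices of $\eta^*$, with signs on the latter matching $\eta^*$---precisely the direction of a second basis pursuit minimizer $x^*+\epsilon z$ forbidden by the uniqueness of $x^*$ (an extension of the argument in Lemma \ref{lmm0}). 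This contradiction closes the converse.
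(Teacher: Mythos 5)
Your characterization of the fixed-point set and the difference property are correct and follow essentially the paper's route. The only cosmetic difference is that where the paper quotes $\mathrm P(y^*)=x^*$ from Eckstein--Bertsekas (Theorem 5), you derive it directly: reduce $T_\gamma y^*=y^*$ to $S_\gamma(\mathrm R y^*)=\mathrm P y^*$, read off $\eta=(x-y^*)/\gamma\in\partial\|x\|_1\cap\mathcal{R}(A^T)$ from the resolvent identity, and conclude $x=x^*$ from optimality plus uniqueness. That is a perfectly valid, self-contained substitute. The second assertion (differences of fixed points lie in $\mathcal{R}(A^T)\cap\mathcal{R}(B^T)$ because subgradients of $\|\cdot\|_1$ at $x^*$ agree on the support) is exactly the paper's argument.

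The gap is in the ``only if'' half of the biconditional. Your blocking analysis is fine up to the point where you conclude that, if no $t\neq 0$ makes $\eta^*+tw$ feasible, there exist saturated zero-indices $i,j$ with $w_i\eta^*_i>0$ and $w_j\eta^*_j<0$. But the next step --- that a Gordan/Farkas separation applied to this single pair $(\eta^*,w)$ ``produces'' a nonzero $z\in\mathcal{N}(A)$ supported on $N(x^*)$ together with the saturated indices, with signs matching $\eta^*$ --- is asserted, not proved, and it is not a one-line separation. What you are reconstructing here is the nontrivial \emph{necessity} direction of the uniqueness characterization for basis pursuit (the paper quotes it from \cite{yin} immediately after the lemma): uniqueness of $x^*$ forces the existence of a dual certificate $\eta$ with $\mathbbm{P}_{\mathcal{N}(B)}(\eta)=\sgn(x^*)$ on the support and $\|\mathbbm{P}_{\mathcal{R}(B^T)}(\eta)\|_\infty<1$ \emph{strictly}. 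That argument requires working with the whole face of the dual feasible set (or a strict-complementarity/LP-duality argument), not a single certificate and a single perturbation direction; as written, the two-sided blocking of one particular $\eta^*$ along one particular $w$ does not by itself yield the sign-constrained null vector $z$.

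The clean way to close this direction, and the one the paper implicitly uses, is to take the strict dual certificate $\eta$ whose existence is guaranteed by uniqueness of $x^*$ (quoted from \cite{yin}), and observe that for any nonzero $w\in\mathcal{R}(A^T)\cap\mathcal{R}(B^T)$ the vector $\eta+tw$ remains a dual certificate for all sufficiently small $t$: it stays in $\mathcal{R}(A^T)$, its components on $N(x^*)$ are untouched because $w$ is supported on the zero-indices, and the strict bound $\|\mathbbm{P}_{\mathcal{R}(B^T)}(\eta)\|_\infty<1$ absorbs the perturbation. This gives a one-parameter family of fixed points with no case analysis on saturation. If you insist on not invoking the strict-certificate result, you must actually carry out the separation argument you sketch, which amounts to reproving it.
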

\begin{proof}
 For any $\eta\in
\partial\|x^*\|_1\cap\mathcal{R}(A^T)$, consider the vector $y^*=x^*-\gamma
\eta.$
Since $Ax^*=b$ and $A^+A\eta=\eta$ (implied by $\eta\in \mathcal{R}(A^T)$),
we have $\mathrm P(y^*)=y^*+A^+(b-Ay^*)=x^*-\gamma \eta+A^+(b-Ax^*+A \gamma
\eta)=x^*+A^+(b-Ax^*)=x^*$.
Further, $\eta\in
\partial\|x^*\|_1$ implies $S_\gamma(x^*+\gamma \eta)=x^*.$ Thus
$T_\gamma(y^*)=S_\gamma(2x^*-y^*)+y^*-x^*=S_\gamma(x^*+\gamma
\eta)-x^*+y^*=y^*.$

Second, for any fixed point $y^*$ of the operator $T_\gamma$, let
$\eta=(x^*-y^*)/\gamma$.
Then
\begin{equation}
\label{eq1}
 \mathrm  P(y^*)=x^*, \quad \mbox{(see Theorem 5 in \cite{Eckstein92onthe})}
\end{equation}
 implies $\eta=A^+A\eta$, thus $\eta\in\mathcal{R}(A^T)$.
Further, $y^*=T_\gamma(y^*)$ implies $S_\gamma(x^*+\gamma \eta)=x^*.$
We have $x^*=\argmin_z \gamma\|z\|_1+\frac12\|z-(x^*+\gamma \eta)\|^2$, thus
$\eta\in\partial\|x^*\|_1.$

Finally, let $y^*_1$ and $y^*_2$ be two fixed
points. Then $y^*_1-y^*_2= -\gamma (\eta_1-\eta_2)$ and
$\mathrm R(y^*_1)-\mathrm R(y^*_2)= \gamma (\eta_1-\eta_2)$
for some $\eta_1,\eta_2\in \partial\|x^*\|_1\cap\mathcal{R}(A^T)$.
Notice that $\eta_1,\eta_2\in \partial\|x^*\|_1$ implies $\eta_1-\eta_2\in
\mathcal{R}({B^T})$.
So we get $y^*_1-y^*_2, \mathrm R(y^*_1)-\mathrm R(y^*_2)\in
\mathcal{R}({A^T})\cap\mathcal{R}({B^T})$.
\end{proof}

With the assumption the matrix $A$ has full row rank, the following condition is
sufficient \cite{1302316}
and necessary \cite{yin} to ensure existence of a unique solution $x^*$ to
(\ref{BP}):
\begin{itemize}
 \item [1.] those columns of $A$ with respect to the support of $x^*$ are
linearly independent.
\item [2.] there exists a dual certificate $\eta\in
\partial\|x^*\|_1\cap\mathcal{R}(A^T)$ such that
$\mathbbm{P}_{\mathcal{N}(B)}(\eta)=\sgn(x^*)$
and $\|\mathbbm{P}_{\mathcal{R}(B^T)}(\eta)\|_\infty<1$.
\end{itemize}

Therefore, with assumption that there is a unique solution $x^*$ to (\ref{BP}),
there always exists a dual certificate
$\eta\in
\partial\|x^*\|_1\cap\mathcal{R}(A^T)$ such that
$\mathbbm{P}_{\mathcal{N}(B)}(\eta)=\mathbbm{P}_{\mathcal{N}(B)}(x^*)$
and $\|\mathbbm{P}_{\mathcal{R}(B^T)}(\eta)\|_\infty<1$.
By Lemma \ref{lemma1}, $y^*=x^*-\gamma \eta$ is a fixed point. And $\mathrm
R(y^*)$ is in the interior of $\mathcal{Q}$ since
$\mathrm R(y^*)=\mathrm R(x^*-\gamma \eta)=x^*+\gamma\eta$.

We call a fixed point $y^*$ an \textit{interior fixed point} if $\mathrm R(y^*)$ is
in the interior of the set $\mathcal{Q}$, or a \textit{boundary fixed point} otherwise.
A boundary fixed point exists only if $\mathcal{R}(A^T)\cap
\mathcal{R}(B^T)\neq\{\mathbf{0}\}$.

\begin{defini}
\label{def1}
Let $\mathcal{U}$ and $\mathcal{V}$ be two subspaces of $\mathbbm{R}^n$ with
$dim(\mathcal{U})=p\leq dim(\mathcal{V}) $.
The
principal angles $\theta_k\in[0,\frac{\pi}{2}]$ ($k=1,\cdots,p$) between
$\mathcal{U}$ and $\mathcal{V}$
are recursively defined by
$$\cos \theta_k=\max_{u\in \mathcal{U}}\max_{v\in \mathcal{V}}u^Tv=u^T_kv_k,
\quad \|u\|=\|v\|=1,$$
\[u^T_ju=0,\quad, u^T_ju=0, \quad, j=1,2,\cdots, k-1.\]
The vectors $(u_1,\cdots,u_p)$ and $(v_1,\cdots,v_p)$ are called principal
vectors.
\end{defini}

\begin{lmm}
\label{lemma5}
Assume $y^*$ is a boundary fixed point and $\mathrm R(y^*)$ lies on a $L$-dimensional face
of the set $\mathcal{Q}$.
Namely, there are $L$ coordinates $j_1,\cdots,j_L$
such that $|\mathrm R(y^*)_{j_l}|= \gamma$ ($l=1,\cdots,L$).
Recall that
 ${B}=[e_{i_1},\cdots,e_{i_r}]^T$, hence $\{j_1,\cdots,j_L\}$ is a subset of
$\{i_1,\cdots,i_r\}$.
Let $B_1$ denote the $(r-1)\times n$ matrix consisting of all
row vectors of $B$ except $[e_{j_1}]^T$. Recursively define $B_l$ as
the $(r-l)\times n$ matrix consisting of all
row vectors of $B_{l-1}$ except $[e_{j_l}]^T$ for $l=2,\cdots,L$.
If there exists an index $l$ such that $\mathcal{R}(A^T)\cap\mathcal{R}(B_l^T)=\mathbf{0}$, let $M$ be the smallest  such integer; otherwise, let $M=L$.
Then $M\leq \dim{\left[\mathcal{R}(A^T)\cap\mathcal{R}(B^T)\right]}$, and the first principal angle between $\mathcal{N}(A)$ and
$\mathcal{N}(B_l)$ $(l=1,\cdots,M)$ is nonzero.
    \end{lmm}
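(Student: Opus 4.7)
The plan is to reduce both conclusions to a single dimension identity, namely $\dim V_l = \dim V_0 - l$ for $l = 1, \ldots, M$, where I abbreviate $V_l := \mathcal{R}(A^T) \cap \mathcal{R}(B_l^T)$. The reduction is a standard orthogonal-complement computation: for any submatrix $C$ of $B$ with $r'$ rows, $(U \cap W)^\perp = U^\perp + W^\perp$ together with the dimension formula gives
\[ \mathcal{N}(A) \cap \mathcal{N}(C) = \{0\} \iff \mathcal{R}(A^T) + \mathcal{R}(C^T) = \mathbb{R}^n \iff \dim[\mathcal{R}(A^T) \cap \mathcal{R}(C^T)] = m + r' - n. \]
Applied with $C = B$, Lemma~\ref{lmm0} delivers the base case $\dim V_0 = m + r - n$; applied with $C = B_l$, the nonzero principal-angle conclusion becomes exactly $\dim V_l = m + (r - l) - n = \dim V_0 - l$. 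The bound $M \leq \dim V_0$ is then a corollary, since once a strict drop occurs at every one of the $M$ steps, $\dim V_M = \dim V_0 - M \geq 0$.

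Because $B_l$ is obtained from $B_{l-1}$ by removing the single row $e_{j_l}^T$, one has $\mathcal{R}(B_l^T) = \mathcal{R}(B_{l-1}^T) \cap \ker e_{j_l}^T$ and hence $V_l = V_{l-1} \cap \ker e_{j_l}^T$. So $\dim V_l$ is either $\dim V_{l-1}$ or $\dim V_{l-1} - 1$; the latter \emph{strict drop} occurs precisely when some $v \in V_{l-1}$ has $v_{j_l} \neq 0$. The identity $\dim V_l = \dim V_0 - l$ for $l \leq M$ is therefore equivalent to a strict drop occurring at every step $l = 1, \ldots, M$.

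The main obstacle is establishing these strict drops, which I would do by induction on $l$. Assume $V_{l-1} \neq \{0\}$ and that strict drops have occurred at steps $1, \ldots, l-1$; then the linear map $V_0 \to \mathbb{R}^{l-1}$, $v \mapsto (v_{j_1}, \ldots, v_{j_{l-1}})$, has kernel $V_{l-1}$ of dimension $\dim V_0 - (l-1)$, and is therefore surjective. To produce a $v \in V_{l-1}$ with $v_{j_l} \neq 0$, I would invoke the strictly interior dual certificate $\bar\eta$ from the unique-solution necessary-and-sufficient condition stated just after Lemma~\ref{lemma1}: $\bar\eta \in \mathcal{R}(A^T) \cap \partial\|x^*\|_1$ with $|\bar\eta_{i_k}| < 1$ on every zero coordinate $i_k$. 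Then $\delta := \bar\eta - \eta^* \in V_0$ satisfies $\delta_{j_l} = \bar\eta_{j_l} - \eta^*_{j_l} \neq 0$ since $|\eta^*_{j_l}| = 1 > |\bar\eta_{j_l}|$. Using surjectivity, pick $v^{\circ} \in V_0$ with $v^{\circ}_{j_k} = \delta_{j_k}$ for $k < l$; then $w := \delta - v^{\circ}$ lies in $V_{l-1}$. It remains to guarantee $w_{j_l} \neq 0$, i.e.\ to exclude a linear dependence of $e_{j_l}$ on $e_{j_1}, \ldots, e_{j_{l-1}}$ when restricted to $V_0$. This is the delicate point; I expect it to be handled either by choosing the labeling $j_1, \ldots, j_L$ greedily so that each face-defining equality $\eta_{j_l} = \pm 1$ is non-redundant given the earlier ones (equivalently, so that $\eta^*$ sits in the relative interior of a face of codimension $l$ at step $l$), or by invoking the ``generic'' qualifier that the paper uses in its discussion of nonstandard cases. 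Once the strict drops are secured, the dimension identity gives both $M \leq \dim V_0 = \dim[\mathcal{R}(A^T) \cap \mathcal{R}(B^T)]$ and $\mathcal{N}(A) \cap \mathcal{N}(B_l) = \{0\}$ for $l = 1, \ldots, M$, so the first principal angle between $\mathcal{N}(A)$ and $\mathcal{N}(B_l)$ is strictly positive.
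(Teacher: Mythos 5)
You have reproduced the paper's argument almost exactly. The paper reduces both conclusions to the same chain of strict dimension drops $\dim[\mathcal{R}(A^T)\cap\mathcal{R}(B_l^T)]\leq \dim[\mathcal{R}(A^T)\cap\mathcal{R}(B_{l-1}^T)]-1$ for $l\leq M$, and it produces the needed vector of $V_0:=\mathcal{R}(A^T)\cap\mathcal{R}(B^T)$ with nonzero $j_l$-th coordinate exactly the way you do: it takes $\mathrm R(y^*)-\mathrm R(z^*)$ with $z^*$ an interior fixed point, which lies in $V_0$ by Lemma \ref{lemma1} and has nonzero $j_l$-th entry because $|\mathrm R(y^*)_{j_l}|=\gamma>|\mathrm R(z^*)_{j_l}|$; up to a factor of $\gamma$ this is your $\delta=\bar\eta-\eta^*$. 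The closing identification of $\mathcal{N}(A)\cap\mathcal{N}(B_l)=\{\mathbf{0}\}$ with $\dim V_l=m+(r-l)-n$ via a dimension count is also how the paper finishes.

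The ``delicate point'' you flag is the real crux, and you should know that the paper does not resolve it either. Its display (\ref{eq16}) establishes only $\mathcal{R}_l\nsubseteq V_0^{\perp}$ for each $l$, which yields the first drop $\dim V_1\leq\dim V_0-1$; the later drops are asserted with the single word ``Similarly,'' although what step $l$ actually requires is $\mathcal{R}_l\nsubseteq V_{l-1}^{\perp}$, a strictly stronger condition since $V_0^{\perp}\subseteq V_{l-1}^{\perp}$. Equivalently, one needs the orthogonal projections of $e_{j_1},\dots,e_{j_M}$ onto $V_0$ to be linearly independent, not merely nonzero. So your proposal is no less complete than the published proof, and you have located precisely the spot where both arguments would need a further ingredient (a genericity hypothesis, or a judicious ordering of $j_1,\dots,j_L$) to be fully airtight.
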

\begin{proof}
Let $\mathcal{R}_l$ ($l=1,\cdots,L$)  denote the one dimensional subspaces
spanned by $e_{j_l}$,
then $\mathcal{R}(B_{l-1})=\mathcal{R}_l\oplus\mathcal{R}(B_{l})$
and $\mathcal{N}(B_{l})=\mathcal{R}_l\oplus\mathcal{N}(B_{l-1})$.

Let $z^*$ be an interior fixed point.  Notice that $|\mathrm
R(y^*)_{j_l}|=\gamma$ and $|\mathrm R(z^*)_{j_l}|<\gamma$ for each
$l=1,\cdots,L$, thus
$\mathbbm{P}_{\mathcal{R}_l}[\mathrm R(y^*)-\mathrm R(z^*)]=\mathrm
R(y^*)_{j_l}-\mathrm R(z^*)_{j_l} \neq\mathbf{0}$.
By Lemma \ref{lemma1} we have
$\mathrm R(y^*)-\mathrm R(z^*)\in
\mathcal{R}(A^T)\cap\mathcal{R}(B^T)$, therefore
\begin{equation}
 \label{eq16}
\mathcal{R}_l\nsubseteq (\mathcal{R}(A^T)\cap\mathcal{R}(B^T))^\perp,\quad
\forall l=1,\cdots,L.
\end{equation}
Since
$\mathcal{R}(B^T)=\mathcal{R}(B_l^T)\oplus\mathcal{R}_1\oplus\cdots\oplus\mathcal{R}_{
l-1}$, with (\ref{eq16}),
we conclude that
\[
\dim{\left[\mathcal{R}(A^T)\cap\mathcal{R}(B_{1}^T)\right]}\leq\dim{
\left[\mathcal{R}(A^T)\cap\mathcal{R}(B^T)
\right]}-1.\]
Similarly, we have
\[
\dim{\left[\mathcal{R}(A^T)\cap\mathcal{R}(B_{l}^T)\right]}\leq\dim{
\left[\mathcal{R}(A^T)\cap\mathcal{R}(B_{l-1}^T)
\right]}-1,\quad l=1,\cdots,M.\]
% Similarly, if $\dim{\mathcal{R}(A^T)\cap\mathcal{R}(B_l^T)}\geq 1$, then we have
% \[\dim{\left[\mathcal{R}(A^T)\cap\mathcal{R}(B_{l-1}^T)\right]}\leq\dim{
% \left[\mathcal{R}(A^T)\cap\mathcal{R}(B_l^T)
% \right]}-1.\]
%  Therefore 
% \[\dim{\left[\mathcal{R}(A^T)\cap\mathcal{R}(B_{M}^T)\right]}\leq\dim{
% \left[\mathcal{R}(A^T)\cap\mathcal{R}(B_l^T)
% \right]}-M.\]
% thus $M\leq
% \dim{\left[\mathcal{R}(A^T)\cap\mathcal{R}(B^T)\right]}=m+r-n$. 
Therefore,
 \begin{equation}
 \label{eq17}\dim{\left[\mathcal{R}(A^T)\cap\mathcal{R}(B_l^T)\right]}\leq\dim{
\left[\mathcal{R}(A^T)\cap\mathcal{R}(B^T)
\right]}-l,\quad \forall l=1,\cdots,M,\end{equation} thus $M\leq
\dim{\left[\mathcal{R}(A^T)\cap\mathcal{R}(B^T)\right]}$. 

Let $\mathcal{N}(A)\cup\mathcal{N}(B)$ denote the subspace spanned by
$\mathcal{N}(A)$ and $\mathcal{N}(B)$.
Since
$\mathbbm{R}^n=[\mathcal{R}(A^T)\cap\mathcal{R}(B^T)]\oplus[\mathcal{N}
(A)\cup\mathcal{N}(B)]
=[\mathcal{R}(A^T)\cap\mathcal{R}( B_l^T)]\oplus[\mathcal{N}(A)\cup\mathcal{N}(
B_l)]$, by(\ref{eq17}),
we have
$\dim{\left[\mathcal{N}(A)\cup\mathcal{N}( B_l)\right]}\geq\dim{
\left[\mathcal{N}(A)\cup\mathcal{N}(B)\right]}+l=\dim{[\mathcal{N}(A)]}+\dim{[
\mathcal{N}(B)]}+l=\dim{[\mathcal{N}(A)]}+\dim{[\mathcal{N}(B_l)]}$ for
$(l=1,\cdots,M)$.
Therefore $\mathcal{N}(A)\cap\mathcal{N}(B_l)=\mathbf{0}$, and the first
principal angle between $\mathcal{N}(A)$ and
$\mathcal{N}(B_l)$ is nonzero.
\end{proof}

\subsection{The characterization of the operator $T_\gamma$}
\label{subsection23}

\begin{lmm}
\label{lmm3}
 For any $y$ satisfying $\mathrm R(y)\in \mathcal{Q}$ and any fixed point $y^*$,
$T_\gamma(y)-T_\gamma(y^*)=[(I_n-B^+B)(I_n-A^+A)+B^+BA^+A](y-y^*)$ where $I_n$
denotes the $n\times n$ identity matrix.
\end{lmm}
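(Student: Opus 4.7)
The plan is to compute $T_\gamma(y) - T_\gamma(y^*)$ directly using the decomposition $T_\gamma = S_\gamma \circ \mathrm{R} + \mathrm{I} - \mathrm{P}$ and the explicit linear-affine formulas available for each component. The key input will be that on $\mathcal{Q}$, soft thresholding is affine with linear part $I - B^+B$ (using the formula $S_\gamma(z) = z - \gamma\sum_{j\in N(x^*)} \sgn(x^*_j)e_j - B^+B z$ recorded in the Preliminaries), while the projection $\mathrm{P}$ and the reflection $\mathrm{R}$ are globally affine, so their differences collapse to purely linear expressions in $y - y^*$.

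First I would handle the easy piece: since $\mathrm{P}(x) = (I - A^+A)x + A^+b$, we get $\mathrm{P}(y) - \mathrm{P}(y^*) = (I - A^+A)(y - y^*)$ and hence $(\mathrm{I} - \mathrm{P})(y) - (\mathrm{I} - \mathrm{P})(y^*) = A^+A(y - y^*)$. Likewise $\mathrm{R}(y) - \mathrm{R}(y^*) = (I - 2A^+A)(y - y^*)$. Next, to apply the affine formula for $S_\gamma$ to both $\mathrm{R}(y)$ and $\mathrm{R}(y^*)$, I need both reflected points to lie in $\mathcal{Q}$ (or at least its closure, where the formula still holds). For $y$ this is given by hypothesis. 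For a fixed point $y^*$, Lemma 2.2 writes $y^* = x^* - \gamma\eta$ with $\eta \in \partial\|x^*\|_1 \cap \mathcal{R}(A^T)$, so $\mathrm{R}(y^*) = x^* + \gamma\eta$; the subgradient condition yields $\mathrm{R}(y^*)_j = x^*_j + \gamma\sgn(x^*_j) \in Q_j$ on $N(x^*)$ and $\mathrm{R}(y^*)_j = \gamma\eta_j \in [-\gamma,\gamma]$ off support, confirming $\mathrm{R}(y^*) \in \overline{\mathcal{Q}}$.

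With both reflections in $\overline{\mathcal{Q}}$, the constant and $\sgn(x^*)$ terms in $S_\gamma$ cancel in the difference, leaving $S_\gamma(\mathrm{R}(y)) - S_\gamma(\mathrm{R}(y^*)) = (I - B^+B)(\mathrm{R}(y) - \mathrm{R}(y^*)) = (I - B^+B)(I - 2A^+A)(y - y^*)$. Adding the $\mathrm{I} - \mathrm{P}$ contribution yields
\[
T_\gamma(y) - T_\gamma(y^*) = \bigl[(I - B^+B)(I - 2A^+A) + A^+A\bigr](y - y^*),
\]
and a one-line algebraic simplification $(I - B^+B)(I - 2A^+A) + A^+A = (I - B^+B)(I - A^+A) + B^+B A^+A$ (expand both sides and compare the $A^+A$ and $B^+B A^+A$ terms) delivers the claimed identity.

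The only genuinely nontrivial step is the verification that $\mathrm{R}(y^*) \in \overline{\mathcal{Q}}$, since the formula for $S_\gamma$ we are invoking is only valid there; once that is in hand, the rest of the proof is a routine algebraic manipulation that exploits the affine structure of $\mathrm{P}$ and $\mathrm{R}$. I do not anticipate any further obstacle.
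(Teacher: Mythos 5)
Your proposal is correct and follows essentially the same route as the paper: both exploit the affine form of $S_\gamma$ on $\mathcal{Q}$ together with the affine maps $\mathrm P$ and $\mathrm R$, check that $\mathrm R(y^*)$ also lies in $\mathcal{Q}$ (the paper deduces this from $S_\gamma(\mathrm R(y^*))=x^*$, you from the explicit form $y^*=x^*-\gamma\eta$ of Lemma \ref{lemma1}, which amounts to the same thing), and then cancel the constant terms and simplify the resulting linear map.
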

\begin{proof}
 First, we have \begin{eqnarray*}
 T_\gamma(y)&=&[S_\gamma \circ (2\mathrm P -\mathrm{I}) +\mathrm{I}-\mathrm
P](y)
=S_\gamma(\mathrm R(y))+y-\mathrm P(y)\\
&=&
\mathrm R(y)-\gamma\sum_{j\in N(x^*)}e_j\sgn(x^*_j)-B^+B\mathrm R(y)+y-\mathrm
P(y)\\
&=& \mathrm P(y)-\gamma\sum_{j\in N(x^*)}e_j\sgn(x^*_j)-B^+B\mathrm R(y).
               \end{eqnarray*}
The last step is due to the fact $\mathrm R=2\mathrm P-\mathrm I$.
The definition of fixed points and (\ref{eq1}) imply
\begin{equation}
\label{eq5}
S_\gamma(\mathrm R(y^*))=x^*,
\end{equation}
 thus $\mathrm R(y^*)\in \mathcal{Q}$. So we also have
$$T_\gamma(y^*)=\mathrm P(y^*)-\gamma\sum_{j\in
N(x^*)}e_j\sgn(x^*_j)-B^+B\mathrm R(y^*).$$
Let $v=y-y^*$, then
\begin{eqnarray*}
T_\gamma(y)-T_\gamma(y^*)&=&\mathrm P(y)-B^+B \mathrm R(y)-\left[\mathrm
P(y^*)-B^+B \mathrm R(y^*)\right]\\
&=& y+A^+(b-Ay)-B^+B(y+2A^+(b-Ay))\\
&&-\left[y^*+A^+(b-Ay^*)-B^+B(y^*+2A^+(b-Ay^*))\right]\\
&=& v-A^+A v -B^+Bv +2B^+BA^+A v\\
&=& [(I_n-B^+B)(I_n-A^+A)+B^+BA^+A]v.
\end{eqnarray*}
\end{proof}

We now study the matrix 
\begin{equation}
 \label{matrix_T}
\mathbf{T}=(I_n-B^+B)(I_n-A^+A)+B^+BA^+A.
\end{equation}

Let $A_0$ be a $n\times(n-m)$ matrix whose column vectors form an orthonormal
basis of $\mathcal{N}(A)$ and $A_1$
be a $n\times m$ matrix whose column vectors form an orthonormal basis of
$\mathcal{R}(A^T)$. Since $A^+A$ represents the projection to $\mathcal{R}(A^T)$
and so is $A_1A_1^T$, we have $A^+A=A_1A_1^T$. Similarly, $I_n-A^+A=A_0A_0^T$.
Let $B_0$ and $B_1$ be
similarly defined for $\mathcal{N}(B)$ and $\mathcal{R}(B^T)$. The matrix
$\mathbf{T}$ can now be
written as $$\mathbf{T}=B_0B_0^TA_0A_0^T+B_1B_1^TA_1A_1^T.$$

It will be convenient to study the norm of the matrix $\mathbf{T}$ in terms of
principal angles
between subspaces.

Without loss of generality, we assume $n-r\leq n-m$. Let $\theta_i$
($i=1,\cdots,n-r$) be the principal angles between the subspaces
$\mathcal{N}(A)$
and $\mathcal{N}(B)$. Then the first principal angle $\theta_1>0$ since
$\mathcal{N}(A)\cap
\mathcal{N}(B)=\mathbf{0}$. Let $\cos \Theta$ denote the $(n-r)\times(n-r)$
diagonal matrix with the diagonal entries
$(\cos \theta_1,\cdots, \cos \theta_{(n-r)}).$

The singular value decomposition (SVD) of the $(n-r)\times(n-m)$ matrix
$E_0=B_0^TA_0$ is $E_0=U_0\cos \Theta V^T$ with
$U_0^TU_0=V^TV=I_{(n-r)}$, and the column vectors of $B_0U_0$ and $A_0V$ give
the principal vectors, see Theorem 1 in \cite{Bjoerck:1971:NMC:891913}.

By the definition of SVD, $V$ is a $(n-m)\times (n-r)$ matrix and its column
vectors are orthonormalized.
 Let $V'$ be a $(n-m)\times(r-m)$ matrix whose column vectors are normalized and
orthogonal to those of $V$.
For the matrix $\widetilde{V}=(V,V')$, we have
$I_{(n-m)}=\widetilde{V}\widetilde{V}^T$.
For the matrix $E_1=B_1^TA_0$, consider $E_1^TE_1=A_0^TB_1B_1^TA_0$. Since
$B_0B_0^T+B_1B_1^T=I_n$,
we have $E_1^TE_1=A_0^TA_0-A_0^TB_0B_0^TA_0=I_{(n-m)}-V\cos^2 \Theta V^T=(V,V')
 \left( \begin{array}{cc}
\sin^2 \Theta & 0 \\
0 & I_{(r-m)} \end{array} \right)(V,V')^T,
$
so the SVD of $E_1$ can be written as
\begin{equation}
\label{eq9}
B_1^TA_0=E_1=U_1\left( \begin{array}{cc}
\sin \Theta & 0 \\
0 & I_{(r-m)} \end{array} \right) \widetilde V^T.
\end{equation}

Notice that $A_0=B_0B_0^TA_0+B_1B_1^TA_0=B_0E_0+B_1E_1$, so we have
\begin{eqnarray}
\notag A_0A_0^T&=&(B_0,B_1)\left( \begin{array}{cc}
E_0E_0^T & E_0E_1^T \\
E_1E_0^T & E_1E_1^T \end{array} \right)(B_0,B_1)^T\\
\label{add_appendixc}&=&
(B_0U_0,B_1U_1)\left(\begin{array}{c|cc}
\cos^2 \Theta & \cos \Theta \sin \Theta & 0\\
\hline
\cos \Theta \sin \Theta  & \sin^2 \Theta &0 \\
0 & 0 & I_{(r-m)}\\
\end{array}\right)(B_0U_0,B_1U_1)^T.
\end{eqnarray}

Let $\mathcal{C}$ denote the orthogonal complement of $\mathcal{R}(A^T)\cap
\mathcal{R}(B^T)$ in
the subspace $\mathcal{R}(B^T)$, namely, $\mathcal{R}(B^T)=[\mathcal{R}(A^T)\cap
\mathcal{R}(B^T)]\oplus\mathcal{C}$.
Then the dimension of $\mathcal{C}$ is $n-m$.
 Let $\widetilde B_0=B_0U_0$ and $\widetilde B_1=B_1U_1$, then the column
vectors of
$\widetilde B_0$ form an orthonormal basis of $\mathcal{N}(B)$. The column
vectors of
$\widetilde B_1$ are a family of orthonormal vectors in $\mathcal{R}(B^T)$.
Moreover, the SVD (\ref{eq9}) implies the columns of $\widetilde B_1$ and
$A_0\widetilde V$ are principal vectors
corresponding to angles $\{\frac{\pi}{2}-\theta_1, \cdots,
\frac{\pi}{2}-\theta_{(n-r)}, 0, \cdots, 0\}$ between the two subspaces
$\mathcal{R}(B^T)$ and $\mathcal{N}(A)$ , see \cite{Bjoerck:1971:NMC:891913}.
And $\theta_1>0$ implies the largest angle between $\mathcal{R}(B^T)$ and
$\mathcal{N}(A)$ is less than
$\pi/2$, so
none of
the
  column vectors of $\widetilde B_1$ is orthogonal to $\mathcal{N}(A)$ thus
all the
  column vectors of $\widetilde B_1$ are in the subspace $\mathcal{C}$.
By counting the dimension of $\mathcal{C}$, we know that
  column vectors of $\widetilde B_1$ form an orthonormal basis of $\mathcal{C}$.

Let $\widetilde B_2$ be a $n\times(r+m-n)$ whose columns form an orthonormal
basis of $\mathcal{R}(A^T)\cap \mathcal{R}(B^T)$,
then we have
\begin{eqnarray*}
 A_0A_0^T
&=&
(\widetilde B_0, \widetilde B_1, \widetilde B_2)\left(\begin{array}{c|cc|c}
\cos^2 \Theta & \cos \Theta \sin \Theta & 0 & 0\\
\hline
\cos \Theta \sin \Theta  & \sin^2 \Theta &0 & 0\\
0 & 0 & I_{(r-m)} & 0\\
\hline
0 & 0& 0& 0_{(r+m-n)}
\end{array}\right)\left(\begin{array}{c}
\widetilde B_0^T \\\widetilde B_1^T\\ \widetilde B_2^T
\end{array}\right).
\end{eqnarray*}

Since $(\widetilde B_0, \widetilde B_1, \widetilde B_2)$
is a unitary matrix and $A_1A_1^T=I_n-A_0A_0^T$, we also have
\begin{equation}
\label{eq11}
  A_1A_1^T
=
(\widetilde B_0, \widetilde B_1, \widetilde B_2)\left(\begin{array}{c|cc|c}
\sin^2 \Theta & -\cos \Theta \sin \Theta & 0 & 0\\
\hline
-\cos \Theta \sin \Theta  & \cos^2 \Theta &0 & 0\\
0 & 0 & 0_{(r-m)} & 0\\
\hline
0 & 0& 0& I_{(r+m-n)}
\end{array}\right)\left(\begin{array}{c}
\widetilde B_0^T \\\widetilde B_1^T\\ \widetilde B_2^T
\end{array}\right).
\end{equation}

Therefore, we get the decomposition
\begin{eqnarray}
 \notag\mathbf{T}&=&B_0B_0^TA_0A_0^T+B_1B_1^TA_1A_1^T\\
\label{eq6}&=&
(\widetilde B_0, \widetilde B_1, \widetilde B_2)\left(\begin{array}{c|cc|c}
\cos^2 \Theta & \cos \Theta \sin \Theta & 0 & 0\\
\hline
-\cos \Theta \sin \Theta  & \cos^2 \Theta &0 & 0\\
0 & 0 & 0_{(r-m)} & 0\\
\hline
0 & 0& 0& I_{(r+m-n)}
\end{array}\right)\left(\begin{array}{c}
\widetilde B_0^T \\\widetilde B_1^T\\ \widetilde B_2^T
\end{array}\right).
\end{eqnarray}

\subsection{Standard cases: the interior fixed points}

Assume the sequence $y^k$ will converge to an interior fixed point.

First, consider the simple case when $\mathcal{R}(A^T)\cap
\mathcal{R}(B^T)=\{\mathbf{0}\}$, then $m+r= n$ and the fixed point is unique
and interior.
 Let $\mathcal{B}_a(z)$ denote the ball centered at $z$ with radius $a$.
Let $\varepsilon$ be the largest number such that
$\mathcal{B}_\varepsilon(\mathrm R(y^*))\subseteq \mathcal{Q}$.
 Let $K$ be the smallest integer such that $y^K\in \mathcal{B}_\varepsilon(y^*)$
(thus $\mathrm R(y^K)\in \mathcal{B}_\varepsilon(\mathrm R(y^*))$).
By nonexpansiveness of $T_\gamma$ and $\mathrm R$, we get $\mathrm R(y^k)\in
\mathcal{B}_\varepsilon(\mathrm R(y^*))$
for any $k\geq K$.
  By a recursive application of Lemma \ref{lmm3}, we have
\begin{equation*}
T_\gamma(y^{k})-y^*=\mathbf{T}(T_\gamma(y^{k-1})-y^*)=\cdots=\mathbf{T}^{k-K}(y^
{K}-y^*),\quad \forall k> K.
\end{equation*}
Now, (\ref{eq6}) and $\mathcal{R}(A^T)\cap \mathcal{R}(B^T)=\{\mathbf{0}\}$ imply
$\|\mathbf{T}\|_2=\cos\theta_1$. Notice that $\mathbf{T}$ is normal, so we have
$\|\mathbf{T}^q\|_2=\|\mathbf{T}\|_2^q$ for any positive integer $q$. Thus we
get the convergence rate for large $k$:
\begin{equation}
 \label{err1}
\|T_\gamma(y^{k})-y^*\|_2\leq(\cos\theta_1)^{k-K}\|y^{K}-y^*\|_2,\quad \forall
k> K.
\end{equation}

If $\mathcal{R}(A^T)\cap \mathcal{R}(B^T)\neq\{\mathbf{0}\}$, then there are
many fixed points  by  Lemma \ref{lemma1}.
 Let $\mathcal{I}$ be the set of
all interior fixed points.
 For $z^*\in \mathcal{I}$, let $\varepsilon(z^*)$ be the largest number such
that $\mathcal{B}_{\varepsilon(z^*)}(\mathrm R(z^*))\subseteq \mathcal{Q}$.

If $y^K\in \bigcup\limits_{z^*\in \mathcal{I}}
\mathcal{B}_{\varepsilon(z^*)}(z^*)$ for some $K$, then
consider the Euclidean projection of $y^K$ to $\mathcal{I}$, denoted by $y^*$.
Then  $\mathbbm{P}_{\mathcal{R}(A^T)\cap \mathcal{R}(B^T)}(y^K-y^*)=\mathbf{0}$
since $y^*_1-y^*_2\in \mathcal{R}(A^T)\cap \mathcal{R}(B^T)$ for any
$y^*_1,y^*_2\in\mathcal{I}$.
By (\ref{eq6}), $\mathcal{R}(A^T)\cap \mathcal{R}(B^T)$ is the eigenspace of
eigenvalue $1$ for the matrix
$\mathbf{T}$. So we have $\|\mathbf{T}(y^K-y^*)\|\leq \cos\theta_1 \|y^K-y^*\|$,
thus
the error estimate (\ref{err1}) still holds.

The sequence $y^k$ may converge to a different fixed
points for each initial value $y^0$; the fixed point $y^*$ is the projection of $y^K$ to $\mathcal{I}$. Here $K$ is the
smallest integer such that $y^K\in \bigcup\limits_{z^*\in \mathcal{I}}
\mathcal{B}_{\varepsilon(z^*)}(\mathrm R(z^*))$.

\begin{thm}
 For the algorithm (\ref{alg1}) solving (\ref{BP}), if $y^k$ converges to an
interior fixed point, then
there exists an integer $K$ such that
(\ref{err1}) holds.
\end{thm}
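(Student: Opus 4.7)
My plan is to linearize the iteration via Lemma \ref{lmm3} once $y^k$ enters a neighborhood of $y^*$ in which $\mathrm R(y^k)\in\mathcal Q$, and then to control powers of the matrix $\mathbf T$ of (\ref{matrix_T}) using the block decomposition (\ref{eq6}).

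First I would fix $\varepsilon>0$ such that $\mathcal B_\varepsilon(\mathrm R(y^*))\subset\mathcal Q$; this is possible because $y^*$ is an interior fixed point. Convergence $y^k\to y^*$ then yields an integer $K$ with $y^K\in\mathcal B_\varepsilon(y^*)$. Since $\mathrm R=2\mathrm P-\mathrm I$ is the reflection about the affine subspace $\{Ax=b\}$, it is an isometry, so $\mathrm R(y^K)\in\mathcal B_\varepsilon(\mathrm R(y^*))\subset\mathcal Q$. Because $T_\gamma$ is nonexpansive and fixes $y^*$, the sequence $\|y^k-y^*\|$ is nonincreasing for $k\geq K$, so by induction $y^k\in\mathcal B_\varepsilon(y^*)$ and $\mathrm R(y^k)\in\mathcal Q$ for every $k\geq K$. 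Iterating Lemma \ref{lmm3} then gives
\[
y^{k}-y^* \;=\; \mathbf T^{\,k-K}(y^K-y^*),\qquad k\geq K.
\]

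The second step is to estimate $\|\mathbf T^{\,k-K}\|_2$ on the subspace in which $y^K-y^*$ lives. Inspecting (\ref{eq6}), each $2\times 2$ block indexed by a principal angle $\theta_i$ equals $\cos\theta_i$ times a plane rotation by $\theta_i$; it is therefore $\cos\theta_i$ times an orthogonal matrix, and any power of it has operator norm $\cos^q\theta_i$. The $0_{(r-m)}$ block is annihilated after one step, while the identity block acts on $\mathcal R(A^T)\cap\mathcal R(B^T)$. In the basis $(\widetilde B_0,\widetilde B_1,\widetilde B_2)$ the whole matrix $\mathbf T$ is thus block-diagonal with each block normal, so $\mathbf T$ is normal and its orthogonal invariant subspaces are preserved by $\mathbf T^q$.

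If $\mathcal R(A^T)\cap\mathcal R(B^T)=\{\mathbf 0\}$, Lemma \ref{lemma1} shows the fixed point is unique and the identity block is absent, so $\|\mathbf T^q\|_2=\cos^q\theta_1$ and (\ref{err1}) follows immediately. If instead $\mathcal R(A^T)\cap\mathcal R(B^T)\neq\{\mathbf 0\}$, Lemma \ref{lemma1} identifies the set $\mathcal I$ of interior fixed points as an affine translate of this subspace, and I would replace $y^*$ by the orthogonal projection of $y^K$ onto $\mathcal I$. This choice forces $y^K-y^*$ to be orthogonal to $\mathcal R(A^T)\cap\mathcal R(B^T)$, which is precisely the eigenspace of $\mathbf T$ associated with the eigenvalue $1$ in (\ref{eq6}); on its orthogonal complement $\mathbf T$ is contractive with operator norm $\cos\theta_1$, and (\ref{err1}) again follows.

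The main subtlety lies in this non-uniqueness case: one must select the limit $y^*$ so that $y^K-y^*$ has no component along $\mathcal R(A^T)\cap\mathcal R(B^T)$, because otherwise the identity block provides no contraction in that direction and the argument would collapse. The affine structure of $\mathcal I$ furnished by Lemma \ref{lemma1}, together with the invariance under the normal matrix $\mathbf T$ of both the eigenspace and its orthogonal complement, is what legitimizes this choice and closes the argument.
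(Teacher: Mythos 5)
Your proposal is correct and follows essentially the same route as the paper: trap the iterates in a ball around $\mathrm R(y^*)$ contained in $\mathcal Q$, linearize via Lemma \ref{lmm3}, exploit normality of $\mathbf T$ and the block structure (\ref{eq6}), and in the non-unique case identify the limit with the Euclidean projection of $y^K$ onto the fixed-point set so that $y^K-y^*$ is orthogonal to the eigenvalue-$1$ eigenspace $\mathcal R(A^T)\cap\mathcal R(B^T)$. The paper's treatment of the non-unique case via the union of balls $\bigcup_{z^*\in\mathcal I}\mathcal B_{\varepsilon(z^*)}(z^*)$ is the same device you describe, so no substantive difference remains.
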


\subsection{Nonstandard cases: the boundary fixed points}
\label{subsection25}
Suppose $y^k$ converges to a boundary fixed point $y^*$.
% Then $y^k\notin \bigcup\limits_{z^*\in \mathcal{I}}
% \mathcal{B}_{\varepsilon(z^*)}(z^*)$ for any $k$.
  With the same notations in Lemma \ref{lemma5},  for simplicity,
we only discuss the case $M=1$. 
 More general cases can be discussed similarly.
Without loss of generality,
assume $j_1 = 1$ and $\mathrm R (y^*)_1=\gamma$. Then the set $\mathcal{Q}$ is equal to $Q_1
\oplus Q_2 \oplus\cdots \oplus Q_n$, with $Q_1 = [-\gamma, \gamma]$.
Consider another set $\mathcal{Q}_1 = (\gamma,+\infty) \oplus Q_2 \oplus\cdots\oplus
Q_n$. Any neighborhood of $\mathrm R (y^*)$ intersects both $\mathcal{Q}$ and $\mathcal{Q}_1$.

There are three cases:
\begin{itemize}
 \item [I.] the sequence $\mathrm R(y^k)$ stays in $\mathcal{Q}$ if $k$ is large
enough,
 \item [II.] the sequence $\mathrm R(y^k)$ stays in $\mathcal{Q}_1$ if $k$ is
large enough,
\item [III.] for any $K$, there exists $k_1,k_2>K$ such that $\mathrm
R(y^{k_1})\in \mathcal{Q}$ and $\mathrm R(y^{k_2})\in \mathcal{Q}_1$.
\end{itemize}

 \textbf{Case I.} Assume  $y^k$ converges to $y^*$ and  $\mathrm R(y^k)$ stay in
$\mathcal{Q}$
for any $k\geq K$.
Then
 $\mathbbm{P}_{\mathcal{R}(A^T)\cap \mathcal{R}(B^T)} (y^K - y^* )$  must be zero.
Otherwise, by
(\ref{eq6}), we have
$\lim\limits_{k\rightarrow \infty} y^k-y^*=\mathbbm{P}_{\mathcal{R}(A^T)\cap
\mathcal{R}(B^T)}(y^K-y^*)\neq \mathbf{0}$.
By (\ref{eq6}), the eigenspace of $\mathbf{T}$ associated with the eigenvalue $1$ is
$\mathcal{R}(A^T)\cap \mathcal{R}(B^T)$, so (\ref{err1}) still holds.

\textbf{Case II}
Assume  $y^k$ converges to $y^*$ and  $\mathrm R(y^k)$ stay in $\mathcal{Q}_1$
for any $k\geq K$.
Let
 $\bar{B}=[e_{i_2},\cdots,e_{i_r}]^T$.
Following Lemma \ref{lmm3}, for any $y$ satisfying $\mathrm R(y)\in
\mathcal{Q}_1$,
we have
$T_\gamma(y)-T_\gamma(y^*)=[(I_n-\bar{B}^+\bar{B})(I_n-A^+A)+\bar{B}^+\bar{B}
A^+A](y-y^*)$.

Without loss of generality,
assume $n-r+1\leq n-m$. Consider the $(n-r+1)$ principal angles between
$\mathcal{N}(A)$ and $\mathcal{N}(\bar{B})$
denoted by
$(\bar\theta_1,\cdots, \bar\theta_{(n-r+1)}).$ Let $\Theta_1$ denote the
diagonal matrix with diagonal entries
$(\bar\theta_1,\cdots, \bar\theta_{(n-r+1)}).$ Then the matrix
$\bar{\mathbf{T}}=(I_n-\bar{B}^+\bar{B})(I_n-A^+A)+\bar{B}^+\bar{B}A^+A$
can be written as
\begin{equation}
\label{eq7}
 \bar{\mathbf{T}}=
(\widetilde B_0, \widetilde B_1, \widetilde B_2)\left(\begin{array}{c|cc|c}
\cos^2 \Theta_1 & \cos \Theta_1 \sin \Theta_1 & 0 & 0\\
\hline
-\cos \Theta_1 \sin \Theta_1  & \cos^2 \Theta_1 &0 & 0\\
0 & 0 & 0_{(r-m-1)} & 0\\
\hline
0 & 0& 0& I_{(r+m-n-1)}
\end{array}\right)\left(\begin{array}{c}
\widetilde B_0^T \\\widetilde B_1^T\\ \widetilde B_2^T
\end{array}\right),
\end{equation}
where $(\widetilde B_0, \widetilde B_1, \widetilde B_2)$ are redefined
accordingly.

By Lemma \ref{lemma5}, $\bar\theta_1>0$.
Following the first case, we have  $\mathbbm{P}_{\mathcal{R}(A^T)\cap
\mathcal{R}(\bar B^T)}(y^K-y^*)=\mathbf{0}$.
So $$\|T_\gamma(y^{k})-y^*\|_2\leq(\cos\bar\theta_1)^{k-K}\|y^{K}-y^*\|_2,\quad
\forall k> K.$$
Convergence is slower than previously, as $\bar\theta_1 \leq \theta_1$.

\textbf{Case III}
Assume  $y^k$ converges to $y^*$ and  $\mathrm R(y^k)$ stay in
$\mathcal{Q}\cup\mathcal{Q}_1$
for any $k\geq K$.
Then $\mathbbm{P}_{\mathcal{R}(A^T)\cap \mathcal{R}(\bar
B^T)}(y^K-y^*)=\mathbf{0}$.
And for $y^k\in \mathcal{Q}_1$ we have
$\|T_\gamma(y^{k})-y^*\|_2\leq(\cos\bar\theta_1)\|y^k-y^*\|_2$.
Let $\mathcal{D}$ be the orthogonal complement of $\mathcal{R}(A^T)\cap
\mathcal{R}(\bar B^T)$
in $\mathcal{R}(A^T)\cap \mathcal{R}( B^T)$, namely $\mathcal{R}(A^T)\cap
\mathcal{R}( B^T)=
\mathcal{R}(A^T)\cap \mathcal{R}(\bar B^T)\oplus\mathcal{D}$. For $y^k\in
\mathcal{Q}$, we have
$\|\mathbbm{P}_{\mathcal{D}^\bot}(T_\gamma(y^{k})-y^*)\|_2\leq \cos\theta_1
\|\mathbbm{P}_{\mathcal{D}^\bot}(y^{k}-y^*)\|_2$
and
$\mathbbm{P}_{\mathcal{D}}(T_\gamma(y^{k})-y^*)=\mathbbm{P}_{\mathcal{D}}(y^{k}
-y^*)$.

For the Case III, which we refer to as nongeneric cases, no convergence results like
$\|T_\gamma(y^{k})-y^*\|_2\leq(\cos\bar\theta_1)\|y^k-y^*\|_2$ can be established since
$\mathbbm{P}_{\mathcal{D}}(T_\gamma(y^{k})-y^*)=\mathbbm{P}_{\mathcal{D}}(y^{k}
-y^*)$ whenever $\mathrm{R} (y^k)\in \mathcal{Q}$. Even though it seems hard
to exclude Case III from the analysis, it has not been observed in our numerical tests.

\subsection{Generalized Douglas-Rachford}
Consider the generalized Douglas-Rachford splitting (\ref{GDR}) with constant relaxation parameter:
\begin{equation}\label{alg5}
\begin{cases} {y}^{k+1}=  y^k+\lambda\left[S_\gamma(2 x^k -y^k) -x^k\right]\\
{x}^{k+1}={y}^{k+1}+A^+(b-Ay^{k+1})\end{cases},\quad \lambda\in(0,2).\end{equation}

Let $T_\gamma^{\lambda}=\mathrm I+\lambda\left[S_\gamma \circ (2\mathrm  P
-\mathrm{I})  -\mathrm P\right]$. Then any fixed point $y^*$ of
$T_\gamma^{\lambda}$
satisfies $\mathrm P(y^*)=x^*$, \cite{Combettes04solvingmonotone}. So the fixed
points set of $T_\gamma^{\lambda}$ is the same as the fixed points set of
$T_\gamma$. Moreover,
for any $y$ satisfying $\mathrm R(y)\in \mathcal{Q}$ and any fixed point $y^*$,
$T^{\lambda}_\gamma(y)-T^{\lambda}_\gamma(y^*)=[
I_n+\lambda(I_n-B^+B)(I_n-2A^+A)-\lambda(I_n-A^+A)](y-y^*)$.

To find the asymptotic convergence rate of (\ref{alg5}), it suffices to consider
the matrix
$\mathbf{T}_\lambda=I_n+\lambda(I_n-B^+B)(I_n-2A^+A)-\lambda(I_n-A^+A)
=(1-\lambda)I_n+\lambda\mathbf{T}$.
By (\ref{eq6}), we have
\[
 \mathbf{T}_\lambda=
\widetilde B \left(\begin{array}{c|cc|c}
\cos^2 \Theta+(1-\lambda)\sin^2 \Theta & \lambda\cos \Theta \sin \Theta & 0 &
0\\
\hline
-\lambda \cos \Theta \sin \Theta  & \cos^2 \Theta+(1-\lambda)\sin^2 \Theta &0 &
0\\
0 & 0 & (1-\lambda)I_{(r-m)} & 0\\
\hline
0 & 0& 0& I_{(r+m-n)}
\end{array}\right)\widetilde B^T,
\]
where $\widetilde B=(\widetilde B_0, \widetilde B_1, \widetilde B_2)$.

Notice that $\mathbf{T}_\lambda$ is a normal matrix. By the discussion in
Section 2, if $y^k$ in the iteration of (\ref{alg5}) converges to
an interior fixed point, the asymptotic convergence rate will be governed
by the matrix $$\mathbf{M}_\lambda=\left(\begin{array}{c|cc}
\cos^2 \Theta+(1-\lambda)\sin^2 \Theta & \lambda\cos \Theta \sin \Theta & 0 \\
\hline
-\lambda \cos \Theta \sin \Theta  & \cos^2 \Theta+(1-\lambda)\sin^2 \Theta &0 \\
0 & 0 & (1-\lambda)I_{(r-m)}
\end{array}\right).$$
Note that $\|\mathbf{M}_\lambda\|=\sqrt{\lambda(2-\lambda)\cos^2
\theta_1+(1-\lambda)^2}\geq \cos\theta_1$ for any $\lambda\in(0,2)$. Therefore,
the asymptotic convergence rate of (\ref{alg5}) is always slower than
(\ref{alg2}) if $\lambda\neq 1$.
We emphasize that this does not mean (\ref{alg2}) is more efficient
than (\ref{alg5}) for $x^k$ to reach a given accuracy.

\subsection{Relation to the Restricted Isometry Property}
Let $A$ be a $m\times n$ random matrix and each column of $A$ is normalized,
i.e.,
$\sum\limits_{i}A_{ij}^2=1$ for each $j$. The Restricted Isometry Property (RIP)
introduced in \cite{1542412} is as follows.
\begin{defini}
For each integer $s=1,2,\cdots,$ the restricted isometry constants $\delta_s$ of
$A$    is the smallest number such that
\begin{equation}
\label{eq8}
(1-\delta_s)\|x\|^2\leq \|Ax\|^2\leq (1+\delta_s)\|x\|^2,
\end{equation}
holds for all vectors $x$ with at most $s$ nonzero entries.
\end{defini}

In particular, any vector with the same support as $x^*$ can be denoted as
$(I_n-B^+B)x$ for some $x\in \mathbbm{R}^n$. The RIP (\ref{eq8}) with $s=n-r$
implies
$$(1-\delta_{(n-r)})\|(I_n-B^+B)x\|^2\leq \|A(I_n-B^+B)x\|^2\leq
(1+\delta_{(n-r)})\|(I_n-B^+B)x\|^2,\quad \forall x\in\mathbbm{R}^n.$$

Let $d$ denote the smallest eigenvalue of $(AA^T)^{-1}$. Then $d>0$ since we
assume $A$ has full row rank.
For any vector $y$, we have
\[\|A^+Ay\|^2=y^TA^T[(AA^T)^{-1}]^TAA^T(AA^T)^{-1}Ay=y^TA^T[(AA^T)^{-1}]^TAy\geq
d\|Ay\|^2,\]
where the last step is due to the Courant–Fischer–Weyl min-max principle.

Therefore, we get
\begin{equation}
\label{eq10}
\|A^+A(I_n-B^+B)x\|^2\geq d\|A(I_n-B^+B)x\|^2\geq d
(1-\delta_{(n-r)})\|(I_n-B^+B)x\|^2, \quad \forall x\in\mathbbm{R}^n,
\end{equation}

We will show that (\ref{eq10}) gives a lower bound of the first principal angle
$\theta_1$ between two subspaces $\mathcal{N}(A)$
and $\mathcal{N}(B)$.
Notice that (\ref{eq11}) implies
\[A^+A(I_n-B^+B)=A_1A_1^TB_0B_0^T=(B_0U_0,B_1U_1)\left(\begin{array}{c|c}
\sin^2 \Theta & 0\\
\hline
-\cos \Theta \sin \Theta  & 0 \\
0 & 0 \\
\end{array}\right)(B_0U_0,B_1U_1)^T,\]
by which we have
$\|A^+A(I_n-B^+B)x\|^2=x^T(B_0U_0,B_1U_1)\left(\begin{array}{c|c}
\sin^2 \Theta & 0\\
\hline
0 & 0 \\
\end{array}\right)(B_0U_0,B_1U_1)^T x.$

Let $z=(B_0U_0,B_1U_1)^T x$. Since
$I_n-B^+B=(B_0U_0,B_1U_1)\left(\begin{array}{c|c}
I_{(n-r)} & 0\\
\hline
0 & 0\\
\end{array}\right)(B_0U_0,B_1U_1)^T$,
 (\ref{eq10}) is equivalent to
\[z^T\left(\begin{array}{c|c}
\sin^2 \Theta & 0\\
\hline
0 & 0 \\
\end{array}\right) z\geq d (1-\delta_{(n-r)}) z^T\left(\begin{array}{c|c}
I_{n-r} & 0\\
\hline
0 & 0 \\
\end{array}\right)z,\quad \forall z\in \mathbbm{R}^n,\]
which  implies $\sin^2\theta_1\geq d (1-\delta_{(n-r)})$ by the
Courant–Fischer–Weyl min-max principle.
So the RIP constant gives us
\[\cos\theta_1\leq \sqrt{1-d (1-\delta_{(n-r)})}.\]

\subsection{Numerical examples}

We consider several examples for (\ref{alg2}). In all the examples,
$y^0=\mathbf{0}$ unless specified otherwise. For examples in this subsection, the angles between the null spaces
can be computed by singular value decomposition (SVD) of $A_0^TB_0$ \cite{Bjoerck:1971:NMC:891913}.

\bigskip \noindent{\bf Example 1}
The matrix ${A}$ is a $3\times 40$ random matrix with standard normal distribution and $x^*$ has three nonzero
components. By counting dimensions, we
know that $\mathcal{R}(A^T)\cap \mathcal{R}(B^T)=\{\mathbf{0}\}$. Therefore
there is only one fixed point.
See Figure \ref{ex1} for the error curve of $x^k$ and $y^k$ with $\gamma=1$.
Obviously, the error $\|x^k-x^*\|$ is not monotonically decreasing but
$\|y^k-y^*\|$ is since the operator $T_\gamma$ is non-expansive. And the slope
of $\log \|y^k-y^*\|$ is exactly $\log(\cos\theta_1)=\log (0.9932)$ for large $k$.

\begin{figure}[ht]
  \begin{center}
\includegraphics[scale=0.5]{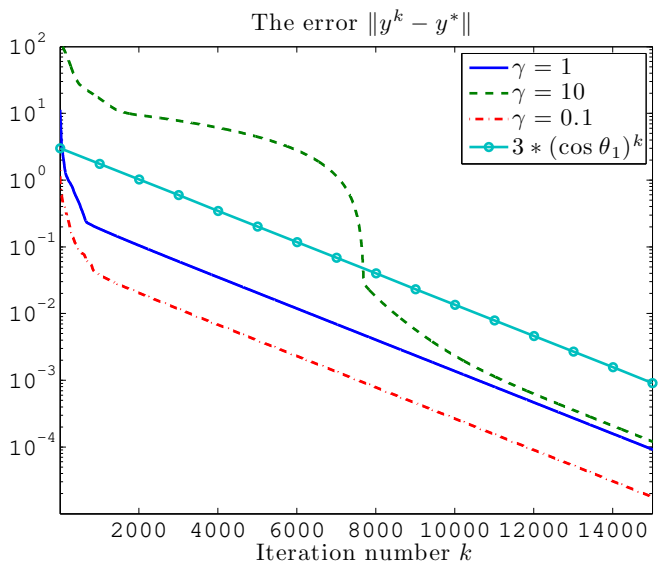}
  \end{center}
  \caption{Example 2: for an interior fixed point, the asymptotic rate remains
the same for different soft-thresholding parameter $\gamma$. The slope of the straight line is $\log(\cos \theta_1)$.}
  \label{ex2}
\end{figure}

\bigskip \noindent{\bf Example 2}
The matrix ${A}$ is a $10\times 1000$ random matrix with standard normal distribution and $x^*$ has ten nonzero
components. Thus there is only one fixed point.
See Figure \ref{ex2} for the error curve of $y^k$ with $\gamma=0.1, 1, 10$.  We
take ${y^*}$ as the result of (\ref{alg2}) after $8\times10^4$
iterations.
The slopes of $\log \|y^k-y^*\|$ for different $\gamma $ are exactly
$\log(\cos\theta_1)=\log(0.9995)$ for large $k$.

\bigskip \noindent{\bf Example 3}
The matrix ${A}$ is a $18\times 100$ submatrix of a $100\times 100$ Fourier
matrix  and $x^*$ has two nonzero components. There are interior and
boundary fixed points. In this example, we fix $\gamma=1$ and test (\ref{alg2})
with random $y^0$ for six times.
See Figure \ref{ex3} for the error curve of $x^k$.  In Figure  \ref{ex3}, in four tests, 
$y^k$ converges to an interior fix point, thus
the convergence rate  for large $k$ is governed by $\cos\theta_1=0.9163$. In the
second and third tests, $y^k$ converges to different boundary fixed points\footnote{At least, numerically so in double precision.} thus
convergence rates are slower than $\cos\theta_1$. Nonetheless, the rate for
large $k$ is still linear.

\begin{figure}[ht]
  \begin{center}
\includegraphics[scale=0.4]{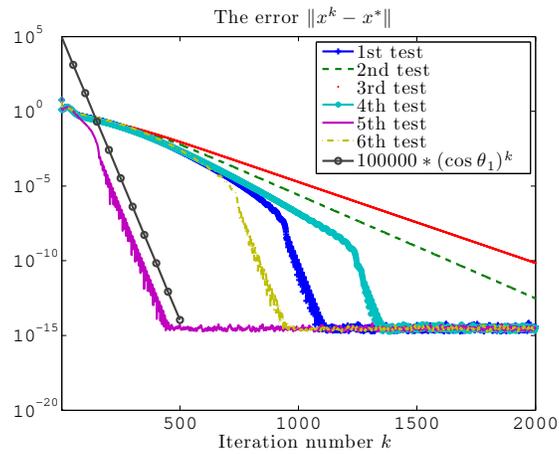}
  \end{center}
  \caption{Example 3: fixed $\gamma=1$ with random $y^0$.}
  \label{ex3}
\end{figure}
\begin{figure}[ht]
  \begin{center}
\includegraphics[scale=0.4]{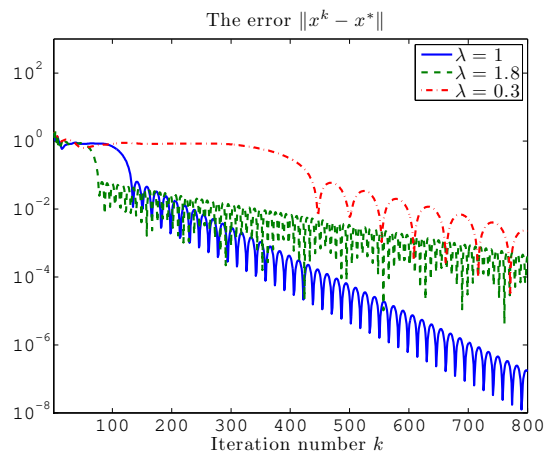}
  \end{center}
  \caption{Example 4: The Generalized Douglas-Rachford (\ref{alg5}) with
different $\lambda$ and fixed $\gamma=1$. The asymptotic convergence rate of
(\ref{alg2}) ($\lambda=1$) is the fastest. }
  \label{ex4}
\end{figure}
\bigskip \noindent{\bf Example 4}
The matrix ${A}$ is a $5\times 40$ random matrix with standard normal distribution and $x^*$ has three nonzero
components.
See Figure \ref{ex4} for the comparison of (\ref{alg2}) and (\ref{alg5}) with
$\gamma=1$.

\begin{rem}
To apply Douglas-Rachford splitting (\ref{alg1}) to basis pursuit (\ref{BP}), we can also choose $g(x) = \| x \|_1$ and $f(x) = \iota_{\{x:Ax=b\}}$, then
Douglas-Rachford iterations become
\begin{equation}\label{alg_appendix}
\begin{cases} {y}^{k+1}=   x^k + 
A^+(b-A(2x^k-y^k))\\
{x}^{k+1}=S_\gamma({y}^{k+1})
\end{cases}.
\end{equation}
The discussion in this section can be applied to (\ref{alg_appendix}). In particular, the corresponding matrix in (\ref{matrix_T})
is 
$\mathbf{T}=(I_n-A^+A)(I_n-B^+B)+A^+AB^+B$, thus all the asymptotic convergence rates remain valid. For all the numerical tests in this paper, we did not observe any significant difference in performance between (\ref{alg2}) and (\ref{alg_appendix}).

\end{rem}

\section{The $\ell^2$ regularized Basis Pursuit}
\setcounter{equation}{0}
\setcounter{figure}{0}
\setcounter{table}{0}
\label{sec3}

\subsection{Preliminaries}
For the $\ell^2$ regularized Basis Pursuit (\ref{BP2}),
to use Douglas-Rachford splitting (\ref{alg1}) to solve the equivalent problem $\min\limits_{x}
\|{x}\|_1+\iota_{\{x:Ax=b\}}+\frac{1}{2\alpha}\|x\|^2$, there are quite a few splitting choices:
\begin{enumerate}
\item \beq
\label{choice1}
f(x)=\|{x}\|_1+\frac{1}{2\alpha p}\|x\|^2, \quad g(x)=\iota_{\{x:Ax=b\}}+\frac{1}{2\alpha q}\|x\|^2, \quad \forall p,q\geq 1, \frac{1}{p}+\frac{1}{q}=1.
\eeq

\item \beq
\label{choice2}
g(x)=\|{x}\|_1+\frac{1}{2\alpha p}\|x\|^2, \quad f(x)=\iota_{\{x:Ax=b\}}+\frac{1}{2\alpha q}\|x\|^2, \quad \forall p,q\geq 1, \frac{1}{p}+\frac{1}{q}=1.
\eeq
\end{enumerate}

The following two resolvents will be needed: 
\begin{itemize}
\item $h(x)=\|{x}\|_1+\frac{1}{2\alpha}\|x\|^2$, $J_{\gamma \partial h}(x)=\argmin\limits_z \gamma \|z\|_1+
\frac{\gamma}{2\alpha}\|z\|^2+\frac12\|z-x\|^2
=\frac{\alpha}{\alpha+\gamma}S_\gamma(x)$.
\item $h(x)=\iota_{\{x:Ax=b\}}+\frac{1}{2\alpha}\|x\|^2$, $J_{\gamma \partial h}(x)=\argmin\limits_z \gamma \iota_{\{z:Az=b\}}+
\frac{\gamma}{2\alpha}\|z\|^2+\frac12\|z-x\|^2
=\frac{\alpha}{\alpha+\gamma}x+A^+(b-\frac{\alpha}{\alpha+\gamma}Ax).$
\end{itemize}

\subsection{Douglas-Rachford splitting}
\label{sec3-2}

In particular, Douglas-Rachford splitting (\ref{alg1}) using (\ref{choice1}) with $p=1$ and $q=\infty$ is equivalent to the dual split Bregman method \cite{LBSB}. See Section \ref{seclbsb} for the equivalence.  We first discuss this special case.

Let
$f(x)=\|{x}\|_1+\frac{1}{2\alpha}\|x\|^2$ and $g(x)=\iota_{\{x:Ax=b\}}$,
the Douglas-Rachford splitting (\ref{alg1}) for (\ref{BP}) reads
\begin{equation}\label{alg3}
\begin{cases} {y}^{k+1}=  \frac{\alpha}{\alpha+\gamma}S_\gamma(2 x^k -y^k) +y^k
-x^k\\
{x}^{k+1}={y}^{k+1}+A^+(b-Ay^{k+1})\end{cases}.\end{equation}

 Since $\|{x}\|_1+\frac{1}{2\alpha}\|x\|^2$
is a strongly convex function, (\ref{BP2}) always has a unique minimizer $x^*$
as long as $\{x:Ax=b\}$ is nonempty.
The first order optimality condition $\mathbf{0}\in \partial F(x^*)+\partial
G(x^*)$ implies the dual certificate set
$(\partial\|x^*\|_1+\frac{1}{\alpha}x^*)\cap\mathcal{R}(A^T)$
is nonempty.
Let $T^\alpha_\gamma=\frac{\alpha}{\alpha+\gamma}S_\gamma \circ (2\mathrm P
-\mathrm{I}) +\mathrm{I}-\mathrm P$.

 \begin{lmm}
\label{lemma3}
 The set of the fixed points of  $T^\alpha_\gamma$ can be described as
$$\left\{y^*: y^*=x^*-\gamma \eta, \eta\in
\left(\partial\|x^*\|_1+\frac{1}{\alpha}x^*\right)\cap\mathcal{R}(A^T)\right\}
.$$
\end{lmm}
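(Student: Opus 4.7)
The plan is to mirror the proof of Lemma \ref{lemma1} almost verbatim, with the only substantive change being that the subdifferential of $f(x)=\|x\|_1+\frac{1}{2\alpha}\|x\|^2$ is $\partial\|x^*\|_1+\frac{1}{\alpha}x^*$ rather than $\partial\|x^*\|_1$, and that the resolvent of $\gamma\partial f$ is the scaled soft-thresholding $\frac{\alpha}{\alpha+\gamma}S_\gamma$ recalled in Section 3.1. So the fixed-point equation for $T^\alpha_\gamma$ is $\mathrm{P}(y^*)=\frac{\alpha}{\alpha+\gamma}S_\gamma(\mathrm{R}(y^*))$, which should produce the same ``$y^*=x^*-\gamma\eta$'' parametrization with $\eta$ drawn from the shifted subdifferential set.

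For the forward direction, I would take any $\eta\in(\partial\|x^*\|_1+\frac{1}{\alpha}x^*)\cap\mathcal{R}(A^T)$, write $\eta=\xi+\frac{1}{\alpha}x^*$ with $\xi\in\partial\|x^*\|_1$, and set $y^*=x^*-\gamma\eta$. Exactly as in Lemma \ref{lemma1}, $\eta\in\mathcal{R}(A^T)$ gives $A^+A\eta=\eta$, hence $\mathrm{P}(y^*)=x^*$ and $\mathrm{R}(y^*)=x^*+\gamma\eta=\frac{\alpha+\gamma}{\alpha}x^*+\gamma\xi$. The key verification is then $\frac{\alpha}{\alpha+\gamma}S_\gamma\bigl(\frac{\alpha+\gamma}{\alpha}x^*+\gamma\xi\bigr)=x^*$: coordinate-wise on the support of $x^*$, $\xi_j=\sgn(x^*_j)$ makes the argument strictly larger than $\gamma$ in magnitude and with the correct sign, so soft-thresholding subtracts $\gamma\sgn(x^*_j)$ and leaves $\frac{\alpha+\gamma}{\alpha}x^*_j$; off the support, the argument is $\gamma\xi_j$ with $|\xi_j|\leq 1$ and soft-thresholding kills it. Substituting into the formula $T^\alpha_\gamma(y^*)=\frac{\alpha}{\alpha+\gamma}S_\gamma(\mathrm{R}(y^*))+y^*-\mathrm{P}(y^*)$ yields $y^*$.

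For the reverse direction, I would start from a fixed point $y^*$ and invoke the same Eckstein–Bertsekas fact used in (\ref{eq1}) that $J_{\gamma G}(y^*)$ is a zero of $F+G$ for any Douglas–Rachford fixed point, which here forces $\mathrm{P}(y^*)=x^*$ because $x^*$ is the unique minimizer of (\ref{BP2}). Setting $\eta=(x^*-y^*)/\gamma=A^+(b-Ay^*)/\gamma$ then places $\eta\in\mathcal{R}(A^T)$ automatically. Membership in the shifted subdifferential follows by rearranging the fixed-point equation: $x^*=\frac{\alpha}{\alpha+\gamma}S_\gamma(x^*+\gamma\eta)$ is equivalent to $S_\gamma(x^*+\gamma\eta)=\frac{\alpha+\gamma}{\alpha}x^*$, and the proximal characterization $u-S_\gamma(u)\in\gamma\,\partial\|S_\gamma(u)\|_1$ (with $\partial\|cx\|_1=\partial\|x\|_1$ for $c>0$) gives $\gamma\eta-\frac{\gamma}{\alpha}x^*\in\gamma\,\partial\|x^*\|_1$, i.e.\ $\eta-\frac{1}{\alpha}x^*\in\partial\|x^*\|_1$.

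The only mildly delicate point, which I would check carefully, is the coordinate-wise soft-thresholding calculation ensuring that the signs of the components of $\frac{\alpha+\gamma}{\alpha}x^*+\gamma\xi$ on $N(x^*)$ agree with $\sgn(x^*)$; this uses $|\xi_j|=1$ there, so the term $\frac{\alpha+\gamma}{\alpha}x^*_j$ dominates and the direction is preserved. Everything else is a transparent adaptation of Lemma \ref{lemma1}, so I would state the lemma as ``by the same argument as Lemma \ref{lemma1}, with $\partial\|x^*\|_1$ replaced by $\partial\|x^*\|_1+\frac{1}{\alpha}x^*$ and the resolvent rescaled by $\alpha/(\alpha+\gamma)$.''
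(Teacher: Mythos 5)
Your proposal is correct and is exactly the adaptation the paper intends: the published proof of this lemma is the single sentence ``The proof is similar to the one of Lemma \ref{lemma1},'' and you have carried out that adaptation faithfully, replacing $\partial\|x^*\|_1$ by $\partial\|x^*\|_1+\frac{1}{\alpha}x^*$ and the resolvent by $\frac{\alpha}{\alpha+\gamma}S_\gamma$, with the coordinate-wise verification of $\frac{\alpha}{\alpha+\gamma}S_\gamma(\frac{\alpha+\gamma}{\alpha}x^*+\gamma\xi)=x^*$ and the proximal characterization in the reverse direction both checking out.
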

The proof is similar to the one of Lemma \ref{lemma1}.
We also have
\begin{lmm}
\label{lmm4}
 For any $y$ satisfying $\frac{\alpha}{\alpha+\gamma}\mathrm R(y)\in
\mathcal{Q}$ and any fixed point $y^*$,
$T^\alpha_\gamma(y)-T^\alpha_\gamma(y^*)=\left[
c(I_n-B^+B)(I_n-A^+A)+cB^+BA^+A+(1-c)A^+A\right](y-y^*)$
where $c=\frac{\alpha}{\alpha+\gamma}$.
\end{lmm}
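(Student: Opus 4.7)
The plan is to mirror the proof of Lemma \ref{lmm3}, adapting the algebra for the extra scalar $c = \frac{\alpha}{\alpha+\gamma}$ that multiplies the soft-thresholding step. Writing $T^\alpha_\gamma = c\,S_\gamma \circ \mathrm R + \mathrm I - \mathrm P$, the hypothesis on $y$ places $\mathrm R(y)$ in the affine region of $S_\gamma$ determined by the sign pattern of $x^*$, so that $S_\gamma(\mathrm R(y)) = \mathrm R(y) - \gamma\sum_{j\in N(x^*)}\sgn(x^*_j)\,e_j - B^+B\,\mathrm R(y)$. This yields a closed-form expression for $T^\alpha_\gamma(y)$ as a linear function of $y$ plus a constant vector that depends only on $x^*$ and $\gamma$.

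Next I would confirm that the same affine formula is valid at any fixed point $y^*$, so that the constant term cancels upon subtraction. Using Lemma \ref{lemma3}, $y^* = x^* - \gamma \eta$ with $\eta \in (\partial\|x^*\|_1 + \tfrac{1}{\alpha}x^*)\cap\mathcal R(A^T)$, and a direct calculation gives $\mathrm P(y^*) = x^*$ and $\mathrm R(y^*) = x^* + \gamma \eta$. A coordinatewise inspection then shows that at active indices $\mathrm R(y^*)_j = x^*_j/c + \gamma\sgn(x^*_j)$ carries the sign of $x^*_j$ with magnitude exceeding $\gamma$, while at inactive indices $\mathrm R(y^*)_j = \gamma \eta_j \in [-\gamma,\gamma]$. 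Thus $\mathrm R(y^*)$ lies in the same linearity region and admits the same affine expansion of $S_\gamma \circ \mathrm R$.

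Subtracting the two expansions, and using the elementary identities $\mathrm P(y) - \mathrm P(y^*) = (I_n - A^+A)(y-y^*)$ and $\mathrm R(y) - \mathrm R(y^*) = (I_n - 2A^+A)(y-y^*)$, the affine term drops and I arrive at
\[
T^\alpha_\gamma(y) - T^\alpha_\gamma(y^*) = \bigl[c(I_n-B^+B)(I_n-2A^+A) + A^+A\bigr](y-y^*).
\]
A short rearrangement, based on the identity $(I_n-B^+B)(I_n-2A^+A) = (I_n-B^+B)(I_n-A^+A) + B^+BA^+A - A^+A$, converts the bracketed matrix into the claimed form $c(I_n-B^+B)(I_n-A^+A) + cB^+BA^+A + (1-c)A^+A$. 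The only non-mechanical step is the coordinatewise verification that $\mathrm R(y^*)$ lies in the region where $S_\gamma$ is affine with the expected formula; everything else is linear bookkeeping paralleling Lemma \ref{lmm3}, with an extra factor $c$ throughout and the residual term $(1-c)A^+A$ encoding the contraction introduced by the $\ell^2$ regularization.
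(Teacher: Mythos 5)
Your proof is correct and takes essentially the same route as the paper's: expand $S_\gamma\circ\mathrm R$ affinely on $\mathcal{Q}$, apply the same expansion at the fixed point, subtract, and simplify using $\mathrm P(y)-\mathrm P(y^*)=(I_n-A^+A)(y-y^*)$ and $\mathrm R(y)-\mathrm R(y^*)=(I_n-2A^+A)(y-y^*)$. The only difference is that you spell out the coordinatewise verification that $\mathrm R(y^*)=x^*+\gamma\eta$ lies in the linearity region, a detail the paper leaves implicit with ``similarly.''
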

\begin{proof}
 First, we have \begin{eqnarray*}
 T^\alpha_\gamma(y)&=&[c S_\gamma \circ (2\mathrm P -\mathrm{I})
+\mathrm{I}-\mathrm P](y)
=c S_\gamma(\mathrm R(y))+y-\mathrm P(y)\\
&=&
c\left[\mathrm R(y)-\gamma\sum_{j\in N(x^*)}e_j\sgn(x^*_j)-B^+B\mathrm
R(y)\right]+y-\mathrm P(y)
               \end{eqnarray*}
Similarly we also have
$$T^\alpha_\gamma(y^*)=c\left[\mathrm R(y^*)-\gamma\sum_{j\in
N(x^*)}e_j\sgn(x^*_j)-B^+B \mathrm R(y^*)\right]+y^*-\mathrm P(y^*).$$
Let $v=y-y^*$, then
\begin{eqnarray*}
T^\alpha_\gamma(y)-T^\alpha_\gamma(y^*)&=&
c\left[\mathrm R(y)-B^+B \mathrm R(y)\right]+y-\mathrm P(y)\\
&&-c\left[\mathrm R(y^*)-B^+B\mathrm R(y^*)\right]-(y^*-\mathrm P(y^*))\\
&=& c[I_n-2A^+A-B^+B+2B^+BA^+A]v+A^+Av\\
&=& [c(I_n-B^+B)(I_n-A^+A)+cB^+BA^+A+(1-c)A^+A]v.
\end{eqnarray*}
\end{proof}

Consider the matrix \beq \label{matrixT}\mathbf{T}(c)=c(I_n-B^+B)(I_n-A^+A)+cB^+BA^+A+(1-c)A^+A,
\quad c=\frac{\alpha}{\alpha+\gamma}.\eeq
Then $\mathbf{T}(c)=c\mathbf{T}+(1-c)A^+A$
where $\mathbf{T}=(I_n-B^+B)(I_n-A^+A)+B^+BA^+A$.

By (\ref{eq11}) and (\ref{eq6}), we have
\begin{equation}
\label{eq12}
  \mathbf{T}(c)
=
\widetilde B\left(\begin{array}{c|cc|c}
 (1-c)\sin^2\Theta +c\cos^2 \Theta & (2c-1)\cos \Theta \sin \Theta & 0 & 0\\
\hline
-\cos \Theta \sin \Theta  &\cos^2 \Theta &0 & 0\\
0 & 0 & 0_{(r-m)} & 0\\
\hline
0 & 0& 0& I_{(r+m-n)}
\end{array}\right)\widetilde B^T,
\end{equation}
where $\widetilde B=(\widetilde B_0, \widetilde B_1, \widetilde B_2)$.

Following the proof in \cite{yin}, it is straightforward to show there exists a
dual certificate $\eta\in
(\partial\|x^*\|_1+\frac{1}{\alpha}x^*)\cap\mathcal{R}(A^T)$ such that
$\mathbbm{P}_{\mathcal{N}(B)}(\eta)=\mathbbm{P}_{\mathcal{N}(B)}(x^*)$
and $\|\mathbbm{P}_{\mathcal{R}(B^T)}(\eta)\|_\infty<1$. So there is at least
one
interior fixed point. Following Lemma \ref{lemma1},
there is only one fixed point $y^*$ if and only if $\mathcal{R}(A^T)\cap
\mathcal{R}(B^T)=\{\mathbf{0}\}$.

For simplicity, we only discuss the interior fixed point case. The boundary
fixed point case
is similar to the previous discussion.

Assume $y^k$ converges to an interior fixed point $y^*$.
Let $\varepsilon$ be the largest number such that
$\mathcal{B}_\varepsilon(\mathrm R(y^*))\subseteq S$.
 Let $K$ be the smallest integer such that $y^K\in \mathcal{B}_\varepsilon(y^*)$
(thus $\mathrm R(y^K)\in \mathcal{B}_\varepsilon(\mathrm R(y^*))$).
By nonexpansiveness of $T^\alpha_\gamma$ and $\mathrm R$, we get $\mathrm
R(y^k)\in \mathcal{B}_\varepsilon(\mathrm R(y^*))$
for any $k\geq K$. So we have
\[\|y^k-y^*\|=\|\mathbf{T}(c)(y^k-y^*)\|=\cdots=\|\mathbf{T}(c)^{k-K}
(y^K-y^*)\|\leq \|\mathbf{T}(c)^{k-K}\|\|(y^K-y^*)\|,\quad\forall k>K. \]

Notice that $\mathbf{T}(c)$ is a nonnormal matrix, so $\|\mathbf{T}(c)^k\|$ is
much less than $\|\mathbf{T}(c)\|^k$ for large $k$. Thus
the asymptotic convergence rate is governed by
$\lim\limits_{k\rightarrow\infty}\sqrt[k]{\|\mathbf{T}(c)^k\|}$, which is equal
to the norm of the eigenvalues of $\mathbf{T}(c)$ with the largest magnitude.

It suffices to study the matrix $\mathbf{M}(c)=\left(\begin{array}{c|c}
(1-c)\sin^2\Theta +c\cos^2 \Theta & (2c-1)\cos \Theta \sin \Theta\\
\hline
-\cos \Theta \sin \Theta  & \cos^2 \Theta\end{array}\right)$
because $\mathbbm{P}_{\mathcal{R}(A^T)\cap\mathcal{R}(B^T)}(y^K-y^*)=\mathbf{0}$
(otherwise $y^k$ cannot converge to $y^*$).

Notice that $\det(\mathbf{M}(c)-\rho
\mathrm{I})=\prod\limits_{i=1}^{n-r}\left[
\rho^2-(c\cos(2\theta_i)+1)\rho+c\cos^2\theta_i\right]$.
Let $\rho(\theta,c)$ denote the magnitude of the solution with the largest magnitude for the
quadratic equation $\rho^2-(c\cos(2\theta)+1)\rho+c\cos^2\theta$,
with discriminant $\Delta=\cos^2(2\theta)c^2-2c+1$.

The two solutions of $\Delta=0$ are $[1\pm\sin(2\theta)]/\cos^2(2\theta)$.
Notice that  $[1+\sin(2\theta)]/\cos^2(2\theta)\geq 1$ for $\theta\in[0,\pi/2]$
and $c\in(0,1)$, we have
\begin{equation}
\label{eq13}
 \rho(\theta,c)=\begin{cases}
\sqrt{c}\cos\theta, & \mbox{if } c\geq
\frac{1-\sin(2\theta)}{\cos^2(2\theta)}=\frac{1}{(\cos\theta+\sin\theta)^2}\\
\frac12\left(c\cos(2\theta)+1+\sqrt{\cos^2(2\theta)c^2-2c+1} \right)& \mbox{if }
c\leq \frac{1}{(\cos\theta+\sin\theta)^2}
                   \end{cases}.
\end{equation}

It is straightforward to check that $\rho(\theta,c)$ is monotonically
decreasing with respect to $\theta$ for $\theta\in[0,\frac{\pi}{4}]$. Therefore, the
asymptotic convergence rate is equal to $\rho(\theta_1,c)$ if $\theta_1\leq\frac{\pi}{4}$.

Let $c^*=\frac{1}{(\cos\theta_1+\sin\theta_1)^2}$ which is equal to
$\argmin_c\rho(\theta_1,c)$. Let $c^\sharp=\frac{1}{1+2\cos\theta_1}$
which is the solution to $\rho(\theta_1,c)=\cos\theta_1$. See Figure
\ref{bestc}. Then for any $c\in (c^\sharp,1)$, we have
$\rho(\theta_1,c)<\cos\theta_1
$. Namely, the asymptotic convergence rate of (\ref{alg3}) is faster than
(\ref{alg2}) if $\frac{\alpha}{\alpha+\gamma}\in (c^\sharp,1)$.
The best asymptotic convergence rate that (\ref{alg3}) can achieve is
$\rho(\theta_1,c^*)=\sqrt{c^*}\cos\theta_1=\frac{\cos\theta_1}{
\cos\theta_1+\sin\theta_1} = \frac{1}{1 + \tan \theta_1}$ when $\frac{\alpha}{\alpha+\gamma}=c^*$.

\begin{rem}
The general cases of the two alternatives (\ref{choice1}) and (\ref{choice2}) with any $p$ and $q$ can be discussed similarly.
For Douglas-Rachford splitting (\ref{alg1}) using (\ref{choice1}) with $q=1$ and (\ref{choice2}) with $p=1$ or $q=1$, the asymptotic linear rate (\ref{eq13}) holds.
Compared to (\ref{alg3}), we observed no improvement in numerical performance by using (\ref{choice1}) or (\ref{choice2}) with any other values of $p$ and $q$ in all our numerical tests.
\end{rem}

\subsection{Generalized Douglas-Rachford and Peaceman-Rachford splittings}
\label{sec3-3}

For the generalized Douglas-Rachford splitting
(\ref{GDR}),
 the choice of $p$ and $q$ in the (\ref{choice1}) and (\ref{choice2}) may result in different performance.
The main difference can be seen in the limiting case $\lambda_k\equiv 2$, for which (\ref{GDR}) becomes the Peaceman-Rachford splitting (\ref{PR}).

If $f(x)$ is convex and $g(x)$ is strongly convex,
the convergence of (\ref{PR}) is guaranteed, 
see \cite{combettes2009iterative,han2012convergence}. On the other hand, (\ref{GDR}) may not converge if $g(x)$ is only convex rather than strongly convex. For instance,
(\ref{PR}) with (\ref{choice1}) and $p=1$ (or (\ref{choice2}) and $q=1$) did not converge for examples in Section \ref{sec44}.
To this end, the best choices of $p$ and $q$ for (\ref{GDR}) should be (\ref{choice1}) with $q=1$ and (\ref{choice2}) with $p=1$.
We only discuss the case of using (\ref{choice1}) with $q=1$. The analysis will hold for the other one.

Let $f(x)=\|x\|_1$ and $g(x)=\iota_{\{x:Ax=b\}}+\frac{1}{2\alpha}\|x\|^2$. Consider the following generalized Douglas-Rachford splitting
with a constant relaxation parameter $\lambda$:
\beq
\label{GDR2}
\begin{cases} {y}^{k+1}=  y^k+\lambda \left[S_\gamma(2 x^k -y^k) 
-x^k\right]\\
{x}^{k+1}=\frac{\alpha}{\alpha+\gamma}{y}^{k+1}+A^+(b-\frac{\alpha}{\alpha+\gamma}Ay^{k+1})\end{cases},\quad \lambda\in (0,2].
\eeq

For the algorithm (\ref{GDR2}), the corresponding matrix in (\ref{matrixT}) is
\[\mathbf{T}(c,\lambda)=I+\lambda[(I-B^+B)(2c(I-A^+A)-I)-c(I-A^+A)]=(1-\lambda)I+\lambda[c\mathbf{T}+(1-c)B^+B],\]
where $c=\frac{\alpha}{\alpha+\lambda}$ and $\mathbf{T}=(I-B^+B)(I-A^+A)+B^+BA^+A$.

By (\ref{eq6}), we have
\begin{equation}
\label{GDR_eig}
  \mathbf{T}(c,\lambda)
=
\widetilde B\left(\begin{array}{c|cc|c}
 \lambda c\cos^2 \Theta &  \lambda c\cos \Theta \sin \Theta & 0 & 0\\
\hline
- \lambda c \cos \Theta \sin \Theta  &  \lambda c\cos^2 \Theta +(1- \lambda c)I_{(n-r)}&0 & 0\\
0 & 0 & (1- \lambda c)I_{(r-m)} & 0\\
\hline
0 & 0& 0& I_{(r+m-n)}
\end{array}\right)\widetilde B^T,
\end{equation}
where $\widetilde B=(\widetilde B_0, \widetilde B_1, \widetilde B_2)$.

It suffices to study the matrix $\mathbf{M}(c,\lambda)=\left(\begin{array}{c|c}
\lambda c \cos^2 \Theta & \lambda c\cos \Theta \sin \Theta\\
\hline
-\lambda c \cos \Theta \sin \Theta  &  \lambda c\cos^2 \Theta +(1- \lambda c)I_{(n-r)}\end{array}\right)$.
Notice that $\det(\mathbf{M}(c,\lambda)-\rho
\mathrm{I})=\prod\limits_{i=1}^{n-r}[
\rho^2-(\lambda c\cos(2\theta_i)-\lambda+2)\rho+c\sin^2\theta_i\lambda^2-(1-c\cos(2\theta_i))\lambda+1]$.
Let $\rho(\theta,c,\lambda)$ denote the magnitude of the solution with the largest magnitude for the
quadratic equation $\rho^2-(\lambda c\cos(2\theta)-\lambda+2)\rho+c\sin^2\theta\lambda^2-(1-c\cos(2\theta))\lambda+1$,
with discriminant $\Delta=\lambda^2(\cos^2(2\theta)c^2-2c+1)$.

The two solutions of $\Delta=0$ are $[1\pm\sin(2\theta)]/\cos^2(2\theta)$.
Notice that  $[1+\sin(2\theta)]/\cos^2(2\theta)\geq 1$ for $\theta\in[0,\pi/2]$
and $c\in(0,1)$, we have
\begin{equation}
\label{besteig_PR}
 \rho(\theta,c,\lambda)=\begin{cases}
\sqrt{c\sin^2\theta\lambda^2-(1-c\cos(2\theta))\lambda+1}, & \mbox{if } c\geq
\frac{1-\sin(2\theta)}{\cos^2(2\theta)}=\frac{1}{(\cos\theta+\sin\theta)^2}\\
\frac12\left(\lambda c\cos(2\theta)-\lambda+2+\lambda\sqrt{\cos^2(2\theta)c^2-2c+1} \right)& \mbox{if }
c\leq \frac{1}{(\cos\theta+\sin\theta)^2}
                   \end{cases}.
\end{equation}

It is straightforward to check that $\rho(\theta,c,\lambda)\geq |1-\lambda c|$ and $\rho(\theta,c,\lambda)$ is monotonically
decreasing with respect to $\theta$ for $\theta\in[0,\frac{\pi}{4}]$. Therefore, the
asymptotic convergence rate of (\ref{GDR2}) is governed by $\rho(\theta_1,c,\lambda)$ if $\theta_1\leq\frac{\pi}{4}$.

The next step is to evaluate $\argmin\limits_\lambda\rho(\theta,c,\lambda)$.
When $c\leq c^*= \frac{1}{(\cos\theta+\sin\theta)^2}$, $\rho(\theta,c,\lambda)$ is monotonically
decreasing with respect to $\lambda$. Let $\bar c=\frac{1}{2-\cos(2\theta)}$, for the quadratic equation  
$\kappa(\lambda)=c\sin^2\theta\lambda^2-(1-c\cos(2\theta))\lambda+1$,
 we have
\[\argmin\limits_\lambda\kappa(\lambda)=\begin{cases}2 , & \mbox{if } c^*\leq c\leq
\bar c\\ \frac{1-c\cos{2\theta}}{c(1-\cos{(2\theta)})},  & \mbox{if } \bar c\leq c<1\end{cases}
\quad \mbox{and} \quad \min\kappa(\lambda)=
\begin{cases}2c-1 , & \mbox{if } c^*\leq c\leq
\bar c\\ \frac{2c-1-c^2\cos^2{2\theta}}{4c\sin^2{\theta}},  & \mbox{if } \bar c\leq c<1\end{cases}.
\]

Let $\lambda^*(\theta,c)=\argmin\limits_\lambda\rho(\theta,c,\lambda)$, then
\beq\label{bestrelax}\lambda^*(\theta,c)=\begin{cases}2  & \mbox{if } c\leq
\bar c= \frac{1}{2-\cos(2\theta)} \\  \frac{\frac{1}{c}-\cos{2\theta}}{1-\cos{(2\theta)}} & \mbox{if } c\geq \bar c \end{cases},\eeq
which is a continuous non-increasing function w.r.t $c$ and has range $(1,2]$ for $c\in(0,1)$.

The convergence rate with $\lambda^*$ is
\[\rho(\theta,c,\lambda^*)=\begin{cases}\rho(\theta,c,2)=c\cos(2\theta)+\sqrt{\cos^2(2\theta)c^2-2c+1}, & \mbox{if } c\leq
c^*=\frac{1}{(\cos\theta+\sin\theta)^2}\\ 
\rho(\theta,c,2)=\sqrt{2c-1}, & \mbox{if } c^*\leq c\leq \bar c= \frac{1}{2-\cos(2\theta)}\\
\rho(\theta,c,\frac{1-c\cos{2\theta}}{c(1-\cos{2\theta})})=\frac{\sqrt{2c-1-c^2\cos^2{(2\theta)}}}{2\sin\theta\sqrt{c}},
& \mbox{if } c\geq \bar c
\end{cases}.\]

See Figure \ref{bestc2} for the illustration of the asymptotic linear rate $\rho(\theta,c,\lambda)$.
\begin{rem}
We emphasize several interesting facts:
\begin{itemize}
\item For Peaceman-Rachford splitting, i.e., (\ref{GDR2}) with $\lambda=2$, if $c\geq c^*$, the asymptotic rate $\rho(\theta,c,2)=\sqrt{2c-1}$ is independent of $\theta$.
\item For any  $c< \tilde c=\frac{1}{2-\cos^2\theta}$, the Peaceman-Rachford splitting is faster than Douglas-Rachford, i.e., $\rho(\theta,c,2)
<\rho(\theta,c,1)$.  
\item The best possible rate of (\ref{GDR2}) is $\rho(\theta,c^*,2)=\frac{1-\tan\theta}{1+\tan\theta}$.
\item The quadratic function $\kappa(\lambda)$ is monotonically increasing if $\lambda\geq \frac{\frac{1}{c}-\cos{2\theta}}{1-\cos{(2\theta)}}$
and decreasing otherwise. 
For any $\lambda<1$,  (\ref{besteig_PR}) and (\ref{bestrelax}) implies $\rho(\theta,c,\lambda)> \rho(\theta,c,1)$. Thus (\ref{GDR2}) with $\lambda<1$ has slower asymptotic rate than (\ref{alg3}).
\end{itemize}

\end{rem}

\begin{figure}[htbp]
  \begin{center}
\subfigure[For the algorithm (\ref{alg3}), $c^*=0.756$ indeed gives the best asymptotic rate
$\frac{1}{1+\tan \theta_1}$ but $c^*$ is not necessarily the most efficient
choice for a given accuracy.]{\includegraphics[scale=0.6]{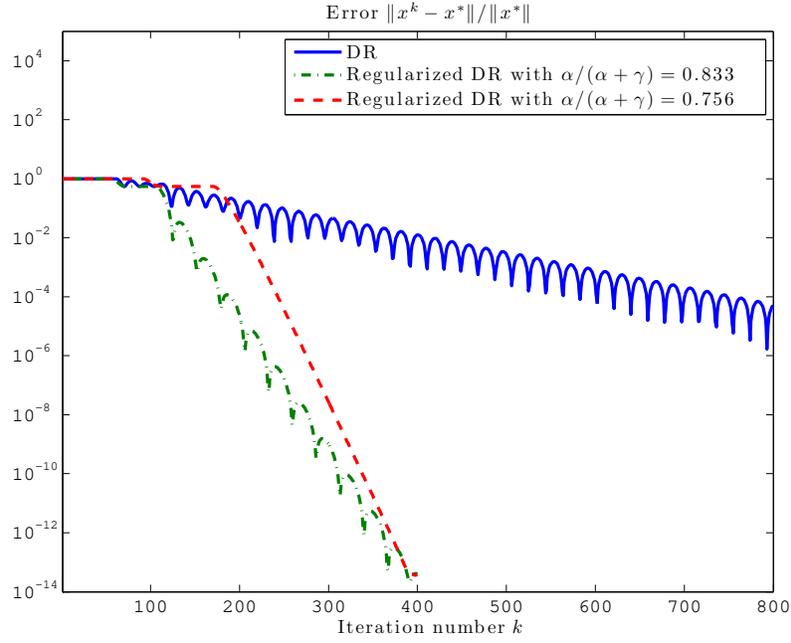}}
\subfigure[The best asymptotic rates.]{\includegraphics[scale=0.6]{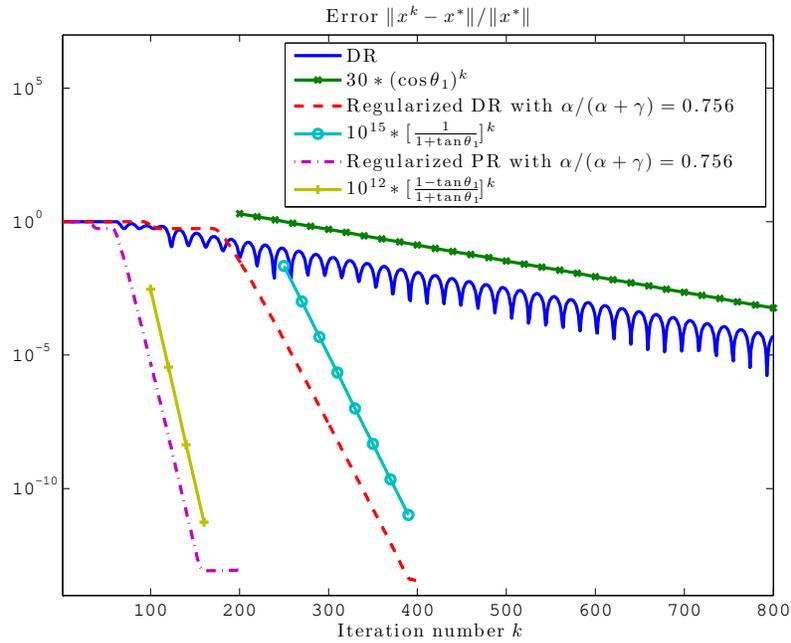}}
  \end{center}
  \caption{Example 5: $\alpha=20$ is fixed. DR stands for (\ref{alg2}) and
Regularized DR stands for (\ref{alg3}). Regularized PR stands for (\ref{GDR2}) with $\lambda=2$.}
  \label{ex5}
\end{figure}
\bigskip \noindent{\bf Example 5}
The matrix ${A}$ is a $40\times 1000$ random matrix with standard normal distribution and $x^*$ has two nonzero
components. 
We test the algorithms (\ref{alg3}) and (\ref{GDR2}). See Section \ref{seclbsb} for the equivalence between
(\ref{alg3}) and the dual split Bregman method in
\cite{LBSB}.
See Figure \ref{ex5} for the error curve of $x^k$. The best choice of the
parameter $c=\alpha/(\alpha+\gamma)$ according to Figure \ref{bestc} should be $\alpha/(\alpha+\gamma)=c^*$,
which is $c^*=0.756$
for this example. Here $c^*$ indeed gives the best asymptotic rate
$\frac{1}{1+\tan \theta_1}$ for (\ref{alg3}) but $c^*$ is not necessarily the most efficient
choice for a given accuracy, as we can see in the Figure \ref{bestc} (a).
The best asymptotic rates  (\ref{alg3}) and (\ref{GDR2})
are $\frac{1}{1+\tan\theta_1}$
and $\frac{1-\tan\theta_1}{1+\tan\theta_1}$ respectively when $c=c^*$ as we can see in Figure \ref{ex5} (b).

\section{Dual interpretation}
\setcounter{equation}{0}
\setcounter{figure}{0}
\setcounter{table}{0}
\label{sec4}

\subsection{Chambolle and Pock's primal dual algorithm}
\label{sec41}
The algorithm (\ref{alg1}) is equivalent to a special case of Chambolle and
Pock's primal-dual algorithm \cite{Chambolle:2011:FPA:1968993.1969036}. Let
$w^{k+1}=(x^k-y^{k+1})/\gamma$, then (\ref{alg1}) with $F=\partial f$ and $G=\partial g$ is equivalent to
\begin{equation}\label{alg4}
\begin{cases} {w}^{k+1}&=(\mathrm{I}+\frac{1}{\gamma}
\partial f^*)^{-1}(w^k+\frac{1}{\gamma}(2x^k-x^{k-1}))\\
{x}^{k+1}&=(\mathrm{I}+\gamma \partial g)^{-1}(x^k-\gamma
w^{k+1})\end{cases},\end{equation}
where $f^*$ is the conjugate function of $f$. Its resolvent can be evaluated
by the Moreau's identity, \[x=(\mathrm{I}+\gamma
\partial f)^{-1}(x)+\gamma\left(\mathrm{I}+\frac{1}{\gamma}\partial f^*\right)^{-1}\left(\frac{x}{
\gamma}\right).\]

Let $X^n=\frac{1}{n}\sum\limits_{k=1}^n x^k$ and $W^n=\frac{1}{n}\sum\limits_{k=1}^n w^k$, then the
duality gap of the point $(X^n,W^n)$ converges with the rate $\mathcal{O}(\frac
1n)$. See \cite{Chambolle:2011:FPA:1968993.1969036} for the proof. If $f(x)=\|x\|_1$ and 
$g(x)=\iota_{\{x:Ax=b\}}$, then $w^{k}$
will converge to a dual certificate $\eta\in \partial\|x^*\|_1\cap
\mathcal{R}(A^T)$.

\subsection{Alternating direction method of multipliers}
In this subsection we recall the the widely used \textit{alternating direction method of multipliers} (ADMM), which serves as a preliminary for the next subsection.
ADMM \cite{GlowinskiMarroco,gabay76} was shown in \cite{gabay83} to be equivalent to the Douglas-Rachford splitting on the dual problem. To be more specific, consider
\begin{equation}
 \label{primal}\tag{P}
\min_{z\in\mathbbm{R}^{m}} \Psi(z)+\Phi(Dz),
\end{equation}
where $\Psi$ and $\Phi$ are convex functions and $D$ is a $n\times m$ matrix. The dual problem of
the equivalent constrained form $\min \Psi(z)+\Phi(w) \;\mbox{s.t.}\; Dz=w$ is
\begin{equation}
 \label{dual}\tag{D}
\min_{x\in\mathbbm{R}^n} \Psi^*(-D^T x)+\Phi^*(x).
\end{equation}
By applying the Douglas-Rachford splitting (\ref{alg1}) on $F=\partial[\Psi^*\circ(-D^T)]$ and $G=\partial\Phi^*$, 
one recovers the classical ADMM algorithm for (\ref{primal}),
\begin{equation}
 \label{alg6}\tag{ADMM}
\begin{cases} z^{k+1}=\argmin\limits_z {\Psi(z)+\frac{\gamma}{2}\|\frac{1}{\gamma}x^k+Dz-w^k\|^2}\\
 w^{k+1}=\argmin\limits_w {\Phi(w)+\frac{\gamma}{2}\|\frac{1}{\gamma}x^k+Dz^{k+1}-w\|^2}\\
x^{k+1}=x^k+\gamma(Dz^{k+1}-w^{k+1})
\end{cases},
\end{equation}
with the change of variable $y^k=x^k+\gamma w^k$, and $x^k$ unchanged.

After its discovery, ADMM has been regarded as a special augmented Lagrangian method. It turns out
that ADMM can also be interpreted in the context of Bregman iterations. The split Bregman method \cite{Goldstein:2009:SBM:1658384.1658386} for (\ref{primal})
 is exactly the same as (\ref{alg6}), see \cite{Setzer:2009:SBA:1567735.1567776}.
Since we are interested in Douglas-Rachford splitting for the primal formulation of the $\ell^1$ minimization,
the algorithms analyzed in the previous sections are equivalent to ADMM or split Bregman method applied to the dual formulation.

\subsection{Split Bregman method on the dual problem}
\label{seclbsb}
In this subsection we show that the analysis in Section \ref{sec3} can also be applied to the
split Bregman method on the dual formulation \cite{LBSB}.
The dual problem of $\ell^2$ regularized basis pursuit (\ref{BP2}) can be written as
\begin{equation}
\label{BP3}
  \min_z -b^Tz+\frac{\alpha}{2}\|A^Tz-\mathbbm{P}_{[-1,1]^n}(A^T z)\|^2,
\end{equation}
where $z$ denotes the dual variable, see  \cite{Yin:2010:AGL:2078411.2078424}.

By switching the first two lines in (\ref{alg6}), we get a slightly different version of ADMM:
\begin{equation}
 \label{alg7}\tag{ADMM2}
\begin{cases} 
 w^{k+1}=\argmin\limits_w {\Phi(w)+\frac{\gamma}{2}\|\frac{1}{\gamma}x^k+Dz^{k}-w\|^2}\\
z^{k+1}=\argmin\limits_z {\Psi(z)+\frac{\gamma}{2}\|\frac{1}{\gamma}x^k+Dz-w^{k+1}\|^2}\\
x^{k+1}=x^k+\gamma(Dz^{k+1}-w^{k+1})
\end{cases}.
\end{equation}

The well-known equivalence between (\ref{alg6}) and Douglas-Rachford splitting was first explained in \cite{gabay83}. See also \cite{Setzer:2009:SBA:1567735.1567776, Esser09}. For completeness, we discuss the equivalence between (\ref{alg7}) and Douglas-Rachford splitting.
\begin{thm}
\label{thm41}
The iterates in (\ref{alg7}) are equivalent to the Douglas-Rachford splitting 
(\ref{alg1}) on 
 $F=\partial\Phi^*$ and $G=\partial[\Psi^*\circ(-D^T)]$ with
$y^k=x^{k-1}-\gamma w^{k}$.
\end{thm}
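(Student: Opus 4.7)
The plan is to check the identification $y^k = x^{k-1} - \gamma w^k$ by induction on $k$, showing that one Douglas--Rachford step (\ref{alg1}) applied to $F=\partial\Phi^*$ and $G=\partial[\Psi^*\circ(-D^T)]$ reproduces, in turn, the $z$-, $x$-, and $w$-updates of (\ref{alg7}). The pivotal computation is the resolvent of $G$, which, via Fenchel duality, converts an inf-problem over the dual variable $x$ into the $z$-subproblem over the primal variable.

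First I would evaluate $J_{\gamma G}(y)$. Writing down the optimality condition for $\argmin_x \gamma\Psi^*(-D^Tx)+\tfrac12\|x-y\|^2$ and applying the chain rule, one obtains $x = y + \gamma D\xi$ with $\xi\in \partial\Psi^*(-D^T x)$. The Fenchel identity $\xi\in \partial\Psi^*(-D^Tx) \iff -D^Tx\in\partial\Psi(\xi)$, together with the substitution $x=y+\gamma D\xi$, recasts $\xi$ as the minimizer of $\Psi(\xi)+\tfrac{\gamma}{2}\|D\xi+y/\gamma\|^2$. Setting $y=y^k=x^{k-1}-\gamma w^k$, this minimization is precisely the $z$-subproblem of (\ref{alg7}), so $\xi=z^k$ and
\[
J_{\gamma G}(y^k) \;=\; y^k + \gamma D z^k \;=\; x^{k-1} - \gamma w^k + \gamma D z^k \;=\; x^k,
\]
where the last equality is the $x$-update of (\ref{alg7}).

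Second I would evaluate $J_{\gamma F}(u)$ for $u = R_{\gamma G}(y^k) = 2 J_{\gamma G}(y^k) - y^k = 2x^k - x^{k-1} + \gamma w^k$. Using the Moreau decomposition $J_{\gamma F}(u) = u - \gamma\,\mathrm{prox}_{\Phi/\gamma}(u/\gamma)$ and the identity $\gamma D z^k = x^k - x^{k-1} + \gamma w^k$, one finds $u/\gamma = x^k/\gamma + D z^k$, which is exactly the shift used in the $w$-subproblem of (\ref{alg7}). Hence $\mathrm{prox}_{\Phi/\gamma}(u/\gamma) = w^{k+1}$ and $J_{\gamma F}(R_{\gamma G}(y^k)) = R_{\gamma G}(y^k) - \gamma w^{k+1}$.

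Finally, substituting into the Douglas--Rachford update
\[
y^{k+1} \;=\; J_{\gamma F}(R_{\gamma G}(y^k)) + y^k - J_{\gamma G}(y^k) \;=\; (2x^k - x^{k-1} + \gamma w^k) - \gamma w^{k+1} + (x^{k-1} - \gamma w^k) - x^k \;=\; x^k - \gamma w^{k+1},
\]
which is the claimed identification at index $k+1$ and closes the induction. The main obstacle is the resolvent computation for $G$: since neither $\Psi$ nor $\Phi$ is assumed smooth and $D$ need not have full rank, one must carefully combine the chain rule with Fenchel duality to rewrite the DR subproblem in the dual variable $x$ as the ADMM subproblem in the primal variable $z$. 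Once that correspondence is made explicit, the remaining identifications are routine substitutions of the (\ref{alg7}) updates.
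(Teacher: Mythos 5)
Your argument is correct and follows essentially the same route as the paper's: both convert the two ADMM2 subproblems into evaluations of $J_{\gamma G}$ and $J_{\gamma F}$ via convex duality and then verify the inductive step $y^k=x^{k-1}-\gamma w^k \Rightarrow J_{\gamma G}(y^k)=x^k$ and $y^{k+1}=x^k-\gamma w^{k+1}$ by the same term matching. The only cosmetic difference is that the paper packages the duality step as the single identity (\ref{thm-eq1}) applied twice, whereas you derive the two instances separately (subdifferential chain rule plus Fenchel inversion for $J_{\gamma G}$, Moreau's identity for $J_{\gamma F}$).
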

\begin{proof}
For any convex function $h$, we have $\lambda\in\partial h(p)\Longleftrightarrow p\in \partial h^*(\lambda)$, which implies
\begin{equation}
 \label{thm-eq1}
\hat p=\argmin\limits_p h(p)+\frac{\gamma}{2}\|D p-q \|^2\Longrightarrow \gamma(D\hat p-q)=J_{\gamma\partial(h^*\circ(-D^T))}(-\gamma q).
\end{equation}
Applying (\ref{thm-eq1}) to the first two lines of (\ref{alg7}), we get
\begin{equation}
 \label{thm-eq2}
 x^k-\gamma w^{k+1}=J_{\gamma F}(x^k+\gamma Dz^{k})-\gamma Dz^{k}.
\end{equation}
\begin{equation}
 \label{thm-eq3}
x^k+\gamma Dz^{k+1}-\gamma w^{k+1} =J_{\gamma G}(x^k-\gamma w^{k+1}).
\end{equation}
Assuming $y^k=x^{k-1}-\gamma w^{k}$, we need to show that the $(k+1)$-th iterate of (\ref{alg7})
satisfies $y^{k+1} = J_{\gamma F}\circ(2J_{\gamma G}-I)y^k+(I-J_{\gamma G})y^k$
and $x^{k+1}=J_{\gamma G}(y^{k+1})$.

Notice that (\ref{thm-eq3}) implies 
\[ J_{\gamma G}(y^{k})=J_{\gamma G}(x^{k-1}-\gamma w^{k})=x^{k-1}+\gamma Dz^{k}-\gamma w^{k}.\]
So we have \[J_{\gamma G}(y^{k})-y^k=x^{k-1}+\gamma Dz^{k}-\gamma w^{k}-(x^{k-1}-\gamma w^{k})=\gamma Dz^{k},\]
and \[2J_{\gamma G}(y^{k})-y^k=x^{k-1}+2\gamma Dz^{k}-\gamma w^{k}=x^{k-1}+\gamma Dz^{k}-\gamma w^{k}+\gamma Dz^{k}=x^k+\gamma Dz^{k}.\]
Thus (\ref{thm-eq2}) becomes 
\[y^{k+1} = J_{\gamma F}\circ(2J_{\gamma G}-I)y^k+(I-J_{\gamma G})y^k.\]
And (\ref{thm-eq3}) is precisely $x^{k+1}=J_{\gamma G}(y^{k+1})$.
\end{proof}

 Applying (\ref{alg7})
on (\ref{BP3}) with $\Psi(z)=-b^Tz$, $\Phi(z)=\frac{\alpha}{2}\|z-\mathbbm{P}_{[-1,1]^n}(z)\|^2$  and $D=A^T$, 
we recover the LB-SB algorithm in \cite{LBSB},
\begin{equation}
 \label{alg8}\tag{LB-SB}
\begin{cases} 
 w^{k+1}=\argmin\limits_w {\frac{\alpha}{2}\|w-\mathbbm{P}_{[-1,1]^n}(w)\|^2+\frac{\gamma}{2}\|\frac{1}{\gamma}x^k+A^Tz^{k}-w\|^2}\\
z^{k+1}=\argmin\limits_z {-b^Tz+\frac{\gamma}{2}\|\frac{1}{\gamma}x^k+A^Tz-w^{k+1}\|^2}\\
x^{k+1}=x^k+\gamma(A^Tz^{k+1}-w^{k+1})
\end{cases}.
\end{equation}

It is straightforward to check that
$\Psi^*\circ(-A)(x)=\iota_{\{x: Ax=b\}}$ and $\Phi^*(x)=\|x\|_1+\frac{1}{2\alpha}\|x\|^2$. By Theorem \ref{thm41}, (\ref{alg8}) is exactly the same as (\ref{alg3}). Therefore, all the results in Section \ref{sec3} hold for (\ref{alg8}).
In particular, the dependence of the eventual linear convergence rate of (\ref{alg8}) on the parameters is governed by (\ref{eq13}) as illustrated in Figure \ref{bestc}.

\begin{rem}
 Let $z^*$ be the minimizer of (\ref{BP3}) then $\alpha S_1(A^T z^*)$ is the solution to (\ref{BP2}), see \cite{Yin:2010:AGL:2078411.2078424}. So 
$t^k=\alpha S_1(A^T z^k)$ can be used as the approximation to $x^*$, the solution to (\ref{BP2}), as suggested in \cite{LBSB}. By Theorem \ref{thm41}, we can see that 
$x^k$ will converge to $x^*$ too. And it is easy to see that $x^k$ satisfies the constraint $A x^k=b$ in (\ref{alg3}).
But $t^k$ does not necessarily lie in the affine set $\{x: Ax=b\}$. 
Thus
$\{t^k\}$ and $\{x^k\}$ are two completely different sequences even though they both can be used in practice.
\end{rem}

\subsection{Practical relevance}
\label{sec44}
To implement the algorithm exactly as presented earlier, the availability of $A^+$ is necessary. Algorithms such as (\ref{alg2}) and (\ref{alg3}), the same as (\ref{alg8}), are not suitable if $(A A^T)^{-1}$ is prohibitive to obtain. On the other hand, there are quite a few important problems for which 
$(A A^T)^{-1}$ is cheap to compute and store in memory. For instance, $A A^T$ may be relatively small and is a well-conditioned matrix in typical compressive sensing problems. Another example is when $A^T$ represents a tight frame transform, for which $A A^T$ is the identity matrix. 

As for the efficiency of  (\ref{alg8}), see \cite{LBSB} for the comparison of (\ref{alg8}) with other state-of-the-art algorithms.

Next, we discuss several examples of (\ref{alg3}), (\ref{alg8}) and (\ref{GDR2}) for the tight frame of discrete curvelets \cite{CDDY06}, in the scope of an application to interpolation of 2D seismic data. In the following examples, let $C$ denote the matrix representing the wrapping version of the two-dimensional fast discrete curvelet transform \cite{CDDY06}, then $C^T$ represents the inverse curvelet transform and $C^T C$ is the identity matrix since the curvelet transform is a tight frame.

% demonstrate an application of (\ref{alg3}) and (\ref{alg8}) for an important problem where a tight frame plays an important role: seismic data interpolation via curvelets\cite{CDDY06}. 

\bigskip \noindent{\bf Example 6}
We construct an example with $A=C^T$ to validate formula (\ref{eq13}).
Consider a random sparse vector $x^*$ with length $379831$ and  $93$ nonzero entries, in the curvelet domain which is the range of the
curvelet transform of $512\times512$ images.
The size of the abstract matrix $C^T$ is $262144\times379831$. Notice that, for any $y\in\mathbbm{R}^{512\times512}$, $C y$ is implemented through fast Fourier transform,
thus the explicit matrix representation of $C$ is never used in computation.
Let $b=C^T x^*$ denote the $512\times512$ image  generated by taking 
the inverse transform of $x^*$, see Figure \ref{curvelet-ex1} (a).

Suppose only the data $b$ is given, to recover a sparse curvelet coefficient, we can solve (\ref{BP}) with $A=C^T$ and $x$ being vectors in curvelet domain.

We use both (\ref{alg2}) and (\ref{alg3}) with $\gamma=2$ and $\alpha=25$ to solve (\ref{BP}). 
Since $A$ is a huge implicitly defined matrix, it is not straightforward to compute the angles exactly by SVD as in small matrices examples.
Instead, we obtain approximately the first principal angle $\theta_1=\arccos (0.9459)$ between $\mathcal{N}(A)$ and $\mathcal{N}(B)$ in a more efficient ad hoc way in Appendix \ref{appendix3}.
 Assuming $\cos\theta_1=0.9459$ and $\frac{\alpha}{\alpha+\gamma}=\frac{25}{27}$, if $y^k$ in (\ref{alg3}) converged to a fixed point of the same type (interior or boundary fixed point) as $y^k$ in (\ref{alg2}), the eventual linear rate of (\ref{alg3}) should
be $\sqrt{\frac{\alpha}{\alpha+\gamma}} \cos\theta_1 $ by (\ref{eq13}).
As we can see in Figure \ref{curvelet-ex1} (b), the error curve 
for (\ref{alg3}) matched well with the eventual linear rate $\sqrt{\frac{\alpha}{\alpha+\gamma}} \cos\theta_1$.

 \begin{figure}[htbp]
  \begin{center}
    \subfigure[The data $b=C^T x^*$.]{\includegraphics[scale=0.40]{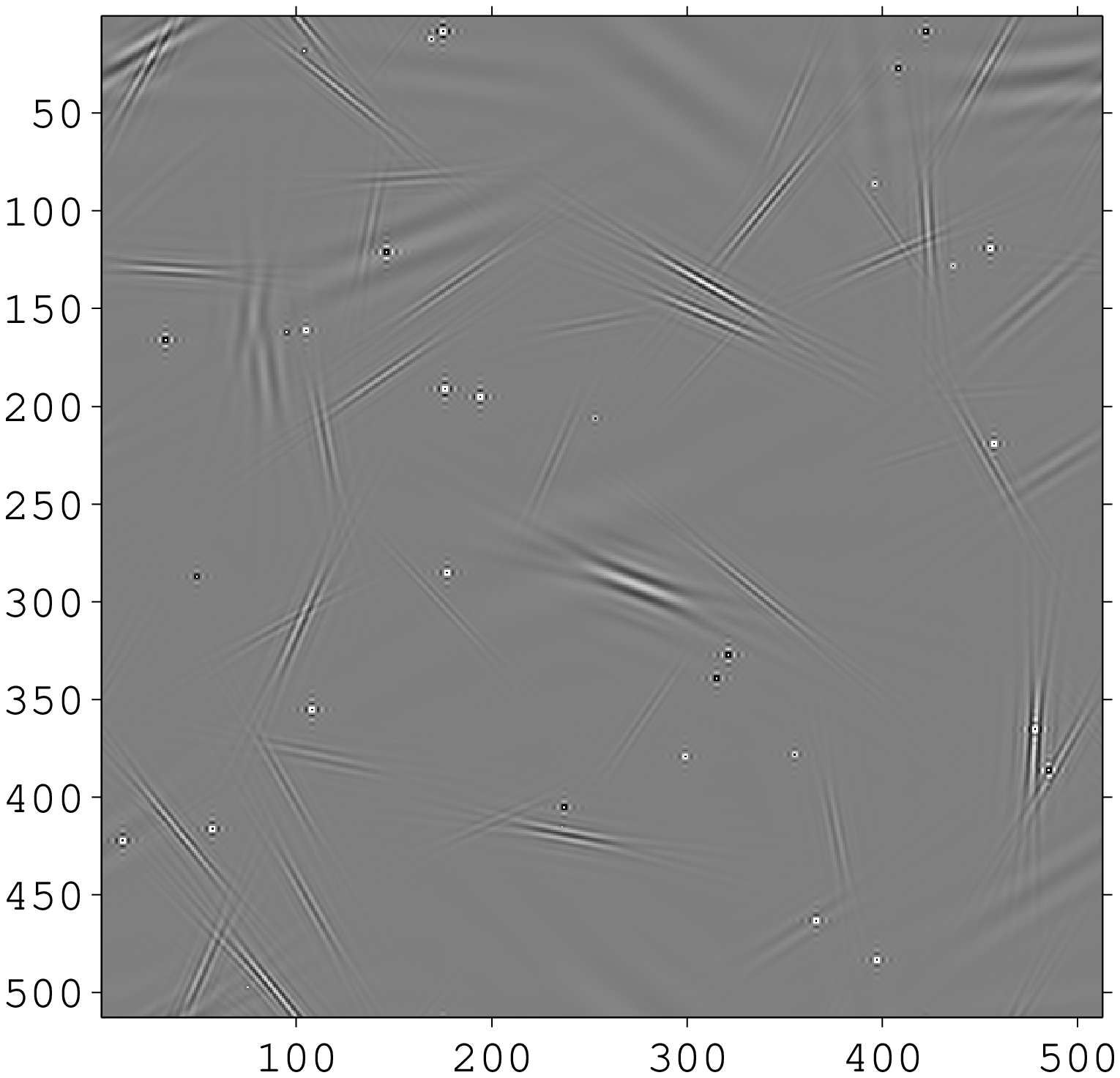}}
     \subfigure[Here $\cos\theta_1=0.9459$. DR stands for (\ref{alg2}) and LBSB stands for (\ref{alg3}) and (\ref{alg8}).  ]{\includegraphics[scale=0.40]{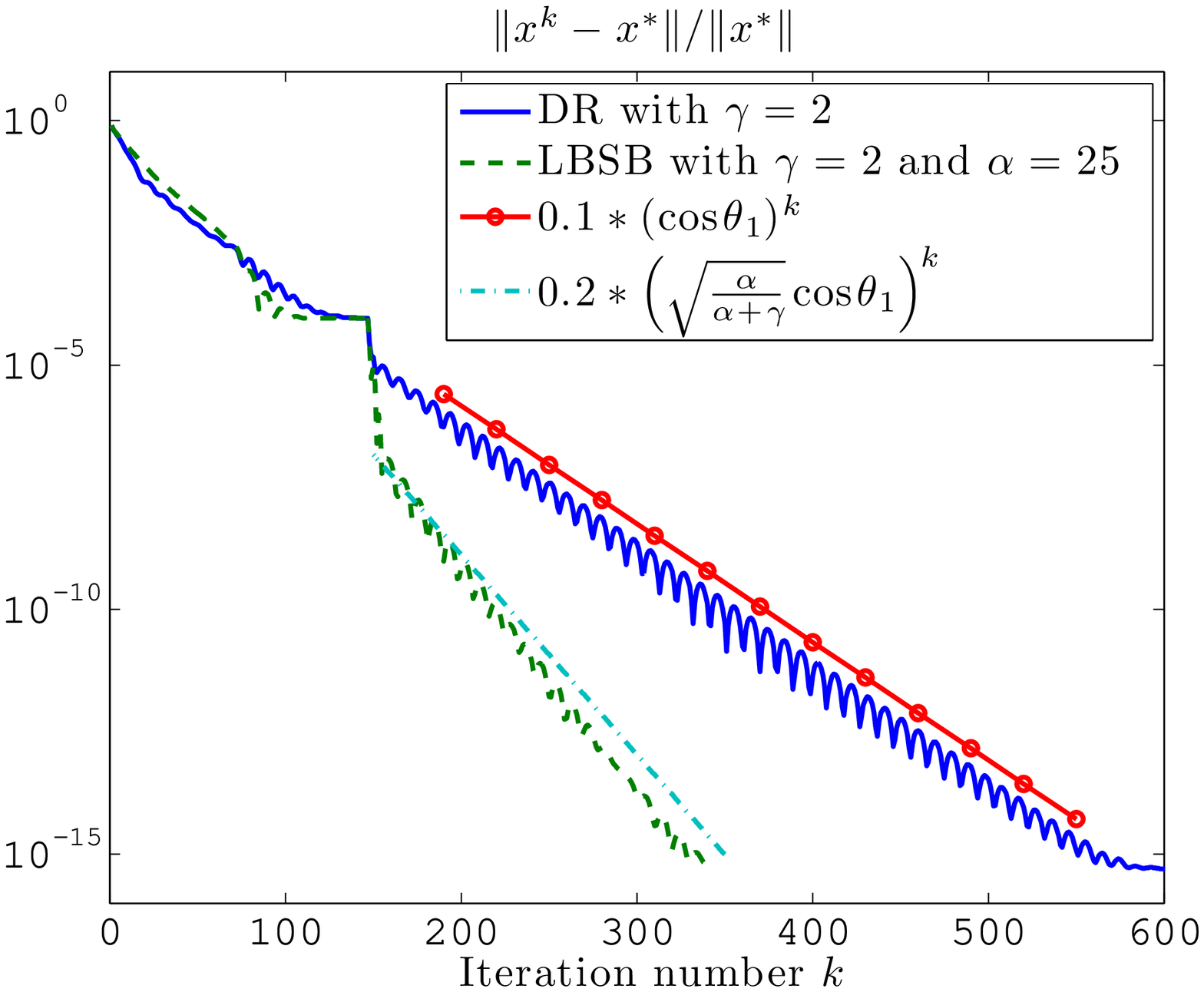}}
  \end{center}
  \caption{Example 6: Recovery of a sparse curvelet expansion.}
  \label{curvelet-ex1}
\end{figure}

 \begin{figure}[htbp]
  \begin{center}
    \subfigure[Left: the original data $b$. Right: reconstructed data with $400$ largest curvelet coefficients $x^*$.]{\includegraphics[scale=0.6]{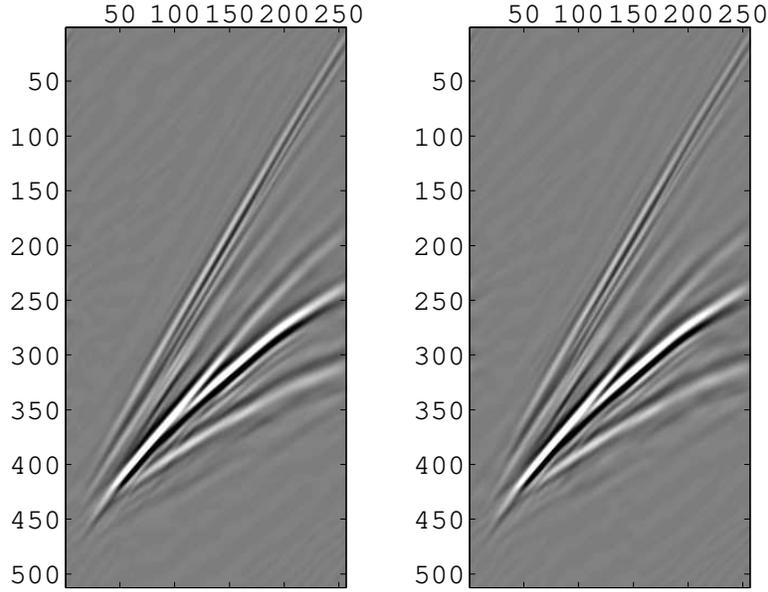}}        
    \subfigure[$\alpha=5$ is fixed. The eventual linear convergence. Douglas-Rachford (LBSB) stands for (\ref{alg3}) and (\ref{alg8}). Peaceman-Rachford
stands for (\ref{GDR2}) with $\lambda=2$.]{\includegraphics[scale=0.6]{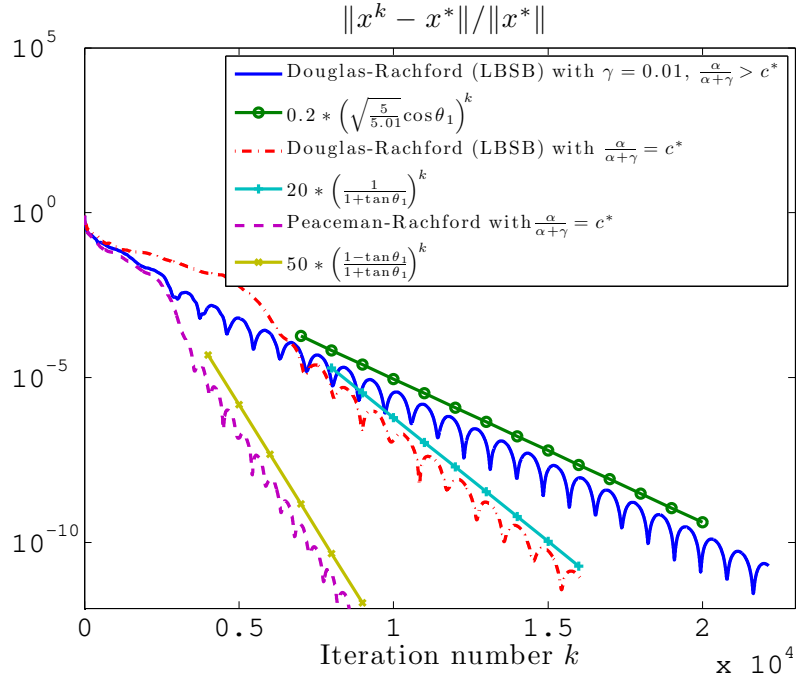}}
  \end{center}
  \caption{Example 7: compression of seismic data.}
  \label{curvelet-ex2}
\end{figure}
 \begin{figure}[htbp]
  \begin{center}
    
    \subfigure[Left: observed data, about $47\%$ random traces missing. Right: recovered data after $200$ iterations with relative error $\|C^Tx^{200}-b\|/\|b\|=2.6\%$ where $b$ is the original data in Figure \ref{curvelet-ex2} (a).]{\includegraphics[scale=0.6]{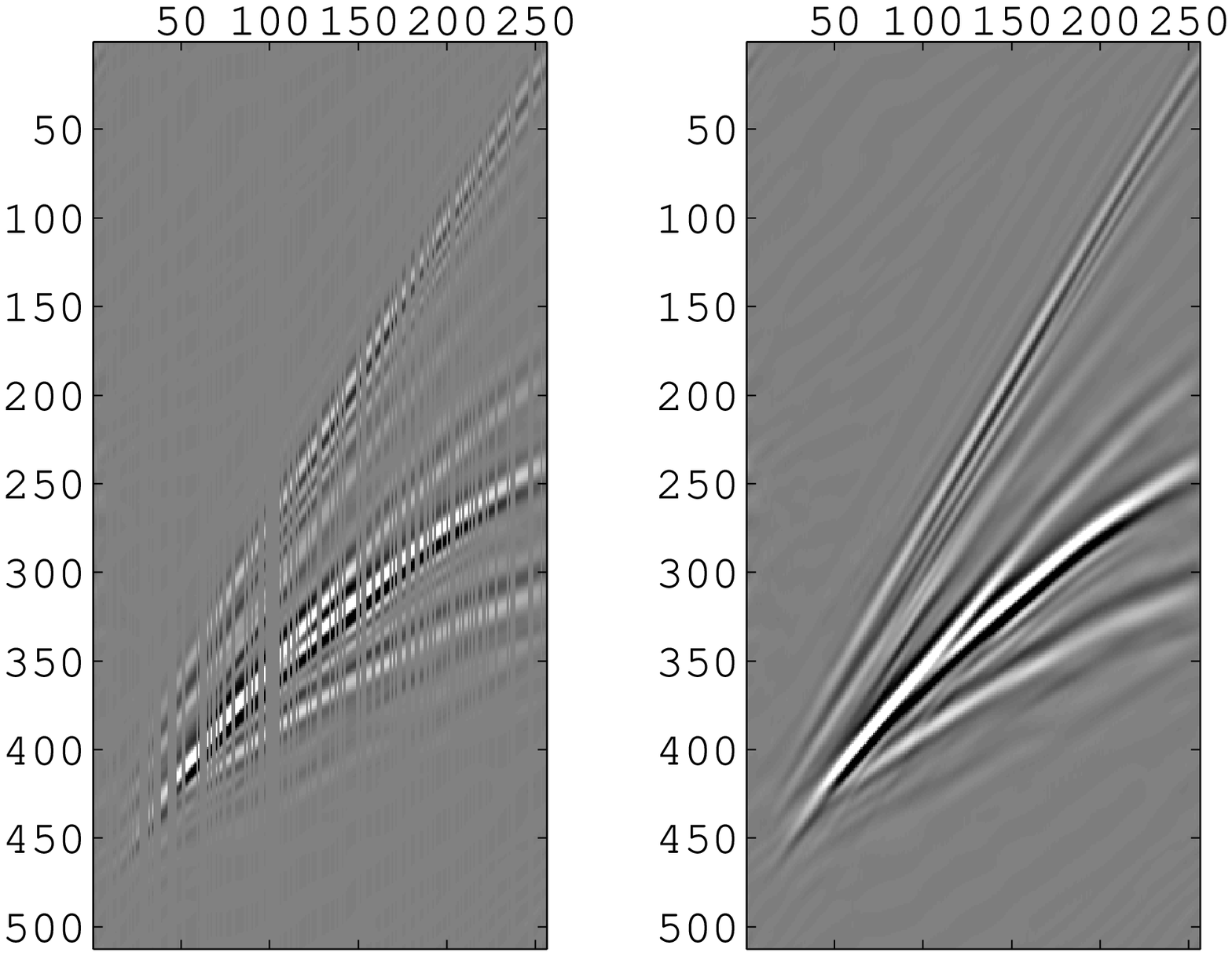}}
\subfigure[Douglas-Rachford (LBSB) stands for (\ref{alg3}) and (\ref{alg8}). Peaceman-Rachford
stands for (\ref{GDR2}) with $\lambda=2$. ]{\includegraphics[scale=0.6]{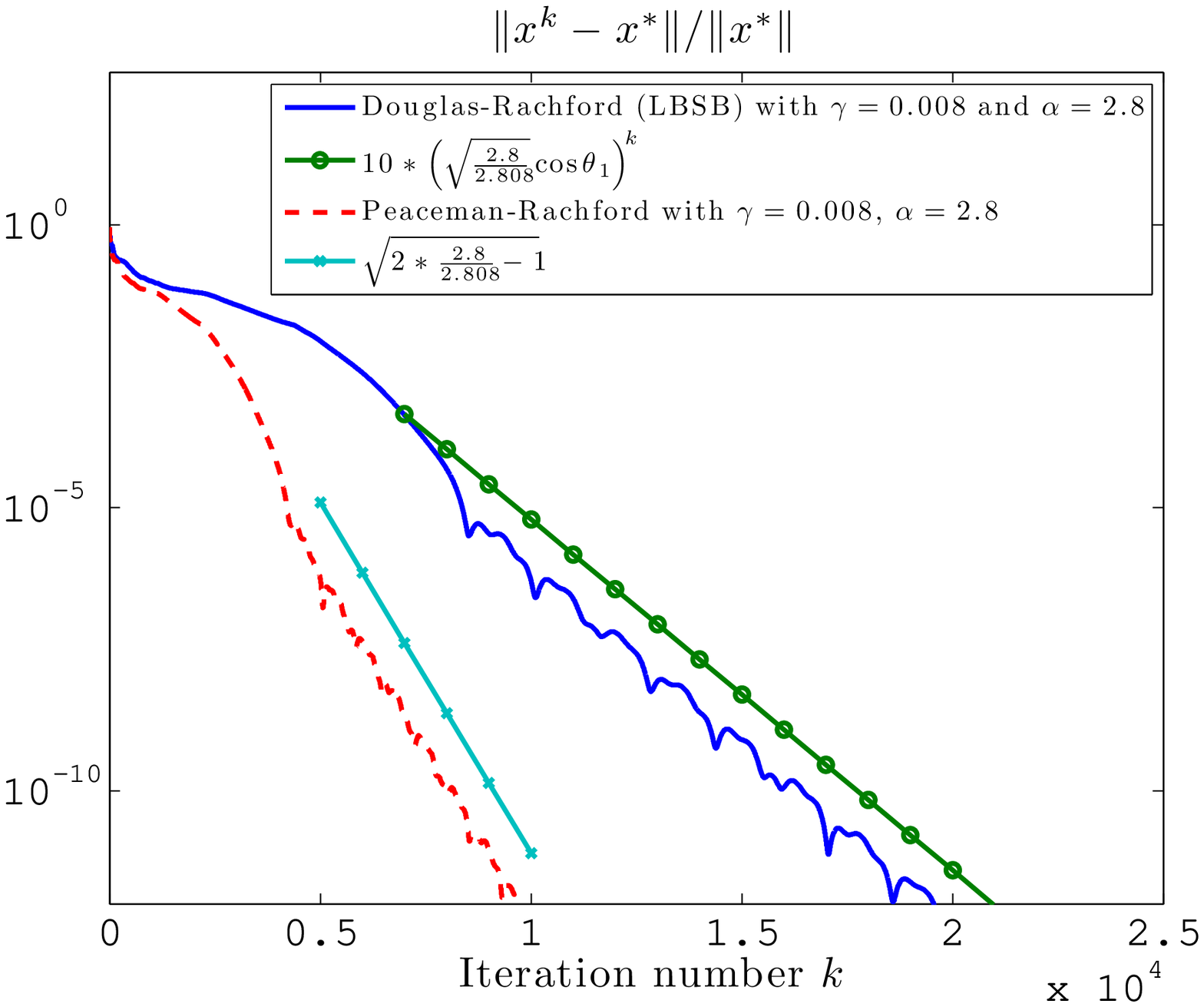}} 
 \end{center}
  \caption{Example 8:  seismic data interpolation.}
  \label{curvelet-ex3}
\end{figure}

\bigskip \noindent{\bf Example 7}
In this example, we consider a more realistic data $b$ as shown in the left panel of Figure \ref{curvelet-ex2} (a).
The data $b$ is generated by the following procedure. First, take a synthetic seismic dataset $\tilde{b}$ consisting of $256$ traces (columns) and $512$ time samples (rows).
Second, solve the basis pursuit $\min\limits_x\|x\|_1$ with $C^Tx=\tilde{b}$ by (\ref{alg3}) up to $50000$ iterations. Third, set the entries in  $x^{50000}$ smaller than $10^{-8}$ to zero and let $x^*$ denote the resulting sparse vector, which has $679$ nonzero entries. Finally, set $b=C^T x^*$.

Given only the data $b$, the direct curvelet transform $C b$ is not as sparse as $x^*$. Thus $C b$ is not the most effective choice to compress the data.
To recover the curvelet coefficient sequence $x^*$, we alternatively solve (\ref{BP}) with $A=C^T$ and $x$ being vectors in curvelet domain.
For this particular example, $x^*$ is recovered. By the method in Appendix \ref{appendix3}, we get $\cos \theta_1=0.99985$. 
To achieve the best asymptotic rate, the parameter ratio $\frac{\alpha}{\alpha+\gamma}$ 
should be $c^*=\frac{1}{(\sin\theta_1+\cos\theta_1)^2}=0.996549$ by (\ref{eq13}). See Figure \ref{curvelet-ex2} (b) for the performance of (\ref{alg8}) and (\ref{GDR2}) with fixed $\alpha=5$ and we can see the asymptotic linear rates match the best rates $\frac{1}{1+\tan\theta_1}$ 
and $\frac{1-\tan\theta_1}{1+\tan\theta_1}$ when $\frac{\alpha}{\alpha+\gamma}=c^*$.

\bigskip \noindent{\bf Example 8} We consider an example of seismic data interpolation via curvelets.
Let $b$ be the same data as in the previous example, see the left panel in Figure \ref{curvelet-ex2} (a).
Let $\Omega$  be the sampling operator corresponding to
 $47$ percent random traces missing, see Figure \ref{curvelet-ex3} (a).

Given the observed data $\bar{b}=\Omega (b)$, to interpolate and recover missing data (traces), one effective model is to pursue sparsity in the curvelet domain \cite{GJI:GJI3698}, i.e., solving
$  \min\limits_x \|x\|_1$ with the constraint 
 $\Omega (C^Tx) =\bar{b}$.   Here $x$ is a vector of curvelet coefficients. If $x^*$ is a minimizer, then $C^T x^*$ can be used as the recovered data. Let $Ax=\Omega (C^Tx)$. Then $A^+=A^T$ since $\Omega$ represents a sampling operator. Thus (\ref{alg3}) and (\ref{alg8}) are straightforward to implement. For this relatively ideal example, the original data $b$ can be recovered. We also observe the eventual linear convergence. 
See Figure \ref{curvelet-ex3} (a) for the recovered data after $200$ iterations of (\ref{alg3}) and (\ref{alg8}).

\section{Conclusion}

In this paper, we analyze the asymptotic convergence rate for Douglas-Rachford
splitting algorithms on the primal formulation
of the basis pursuit, providing a quantification of asymptotic convergence rate of such
algorithms.
In particular, we get 
the asymptotic convergence rates for $\ell^2$-regularized Douglas-Rachford, 
 and the generalized Douglas-Rachford including the Peaceman-Rachford splitting.
The explicit dependence of the convergence rate on the parameters may shed light on how to choose parameters in practice.

\begin{appendices}
  \renewcommand\thetable{\thesection\arabic{table}}
  \renewcommand\thefigure{\thesection\arabic{figure}}
\section{}
\setcounter{equation}{0}

\label{appendix1}
  \begin{lmm}  
 Let $T$ be a firmly non-expansive operator, i.e.,  $\|T(u)-T(v)\|^2\leq \langle u-v,T(u)-T(v)\rangle$ for any $u$
and $v$. Then the iterates $y^{k+1}=T(y^k)$ satisfy $\|y^k-y^{k+1}\|^2\leq \frac{1}{k+1}\|y^{0}-y^*\|^2$ where $y^*$ 
is any fixed point of $T$.
\end{lmm}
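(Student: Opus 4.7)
The plan is to combine two consequences of firm non-expansiveness: a telescoping summability of the increments $\|y^k - y^{k+1}\|^2$, and the monotonic decay of these increments in $k$. Once both are in hand, a pigeonhole-style inequality delivers the $\mathcal{O}(1/k)$ bound.

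First, I would derive the key inequality
\[
\|y^k - y^{k+1}\|^2 \leq \|y^k - y^*\|^2 - \|y^{k+1} - y^*\|^2.
\]
Since $y^* = T(y^*)$, the left-hand side equals $\|(I-T)(y^k) - (I-T)(y^*)\|^2$. Expanding the square and applying the defining inequality $\langle u - v, T(u) - T(v)\rangle \geq \|T(u) - T(v)\|^2$ with $u = y^k$, $v = y^*$ yields exactly the bound above. Summing from $k=0$ to $k=K$ gives the telescoping estimate
\[
\sum_{k=0}^{K} \|y^k - y^{k+1}\|^2 \leq \|y^0 - y^*\|^2 - \|y^{K+1} - y^*\|^2 \leq \|y^0 - y^*\|^2.
\]

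Next, I would show that the sequence $\|y^k - y^{k+1}\|$ is non-increasing. Applying firm non-expansiveness with $u = y^k$, $v = y^{k+1}$ and noting $T(y^k) = y^{k+1}$, $T(y^{k+1}) = y^{k+2}$, one obtains
\[
\|y^{k+1} - y^{k+2}\|^2 \leq \langle y^k - y^{k+1}, y^{k+1} - y^{k+2}\rangle \leq \|y^k - y^{k+1}\|\,\|y^{k+1} - y^{k+2}\|
\]
by Cauchy--Schwarz, from which $\|y^{k+1} - y^{k+2}\| \leq \|y^k - y^{k+1}\|$ follows immediately.

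Finally, I would combine the two ingredients. Monotonicity implies $\|y^K - y^{K+1}\|^2 \leq \|y^j - y^{j+1}\|^2$ for every $0 \leq j \leq K$, so
\[
(K+1)\,\|y^K - y^{K+1}\|^2 \leq \sum_{j=0}^{K} \|y^j - y^{j+1}\|^2 \leq \|y^0 - y^*\|^2,
\]
which is the claimed bound (with $k = K$). There is no real obstacle here; the only subtle point is making sure firm non-expansiveness is used in the stronger ``Fej\'er-like'' form $\|y^{k+1}-y^*\|^2 + \|y^k - y^{k+1}\|^2 \leq \|y^k - y^*\|^2$ rather than merely $\|y^{k+1}-y^*\| \leq \|y^k - y^*\|$, since the latter alone does not produce a summable series.
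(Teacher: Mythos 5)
Your proposal is correct and follows essentially the same route as the paper's proof: both derive the Fej\'er-type inequality $\|y^k-y^{k+1}\|^2 \leq \|y^k-y^*\|^2 - \|y^{k+1}-y^*\|^2$ from firm non-expansiveness, sum it telescopically, establish monotonicity of the increments via firm non-expansiveness plus Cauchy--Schwarz, and conclude by averaging. No differences worth noting.
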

\begin{proof}
 The firm non-expansiveness implies 
\begin{eqnarray*}
 \|(I-T)(u)-(I-T)(v)\|^2&=&\|u-v\|^2+\|T(u)-T(v)\|^2-2\langle u-v,T(u)-T(v)\rangle \\
&\leq&\|u-v\|^2-\|T(u)-T(v)\|^2.
\end{eqnarray*}
  Let $u=y^*$ and $v=y^k$,
then
\[\|y^{k+1}-y^k\|^2\leq\|y^{k}-y^*\|^2-\|y^{k+1}-y^*\|^2.\]
Summing the inequality above, we get $\sum\limits_{k=0}^{\infty} \|y^{k+1}-y^k\|^2\leq\|y^{0}-y^*\|^2$. By the firm non-expansiveness and the Cauchy-Schwarz inequality,  we have
$ \|y^{k+1}-y^k\|\leq \|y^{k}-y^{k-1}\|,$
which
implies $\|y^{n+1}-y^n\|^2\leq \frac{1}{n+1}\sum\limits_{k=0}^{n}\|y^{k+1}-y^k\|^2\leq \frac{1}{n+1}\sum\limits_{k=0}^{\infty}\|y^{k+1}-y^k\|^2\leq
\frac{1}{n+1}\|y^{0}-y^*\|^2$.
\end{proof}
For the Douglas-Rachford splitting, see \cite{He12onnon-ergodic} for a different proof for this fact.

\section{}
\setcounter{equation}{0}
\label{appendix3}

Suppose $\mathcal{N}(A)\cap\mathcal{N}(B)=\{\mathbf{0}\}$, we discuss an ad hoc way to find an approximation of the first principal angle $\theta_1$ between $\mathcal{N}(A)$ and $\mathcal{N}(B)$. 
Define the projection operators
$P_{\mathcal{N}(A)}(x)=(I-A^+A)x$ and $P_{\mathcal{N}(B)}(x) = (I-B^+B)x$.
Consider  finding a point in the intersections of two linear subspaces, 
\begin{equation}\label{feasibility}\mbox{find}\quad x\in \mathcal{N}(A)\cap\mathcal{N}(B),\end{equation}
by von Neumann's alternating projection algorithm,
\begin{equation}
 \label{pocs} x^{k+1}=P_{\mathcal{N}(A)} P_{\mathcal{N}(B)} (x^k),
\end{equation}
or the Douglas-Rachford splitting, 
\begin{equation}
 \label{dr_fea} y^{k+1}=\frac12[(2P_{\mathcal{N}(A)}-I) (2P_{\mathcal{N}(B)}-I)+I] (y^k),\quad x^{k+1}=P_{\mathcal{N}(B)}(y^{k+1}).
\end{equation}

For the algorithm (\ref{pocs}), we have the error estimate $\|x^{k}\|=\|(I-A^+A)(I-B^+B)^k x^0\|\leq (\cos\theta_1)^{2k}\|x^0\|$ by (\ref{add_appendixc}).

Assume $y^*$ and $x^*$ are the fixed points of the iteration (\ref{dr_fea}). Let $\mathbf{T}=(I-A^+A)(I-B^+B)+I$. For the algorithm (\ref{dr_fea}), by (\ref{eq6}),
 we have
\[\|x^{k+1}-x^*\|\leq \|y^{k+1}-y^*\|=\|\mathbf{T}(y^k-y^*)\|=\|\mathbf{T}^k(y^0-y^*)\|\leq (\cos\theta_1)^{k}\|y^0-y^*\|.\]
\end{appendices}
Notice that $\mathbf{0}$ is the only solution to (\ref{feasibility}). 
 By fitting lines to $\log(\|x^k\|)$ for large $k$ in (\ref{pocs}) and (\ref{dr_fea}), we get an approximation of $2 \log\cos\theta_1 $ and $\log \cos\theta_1$ respectively. In practice, (\ref{pocs}) is better since the rate is faster and $\|x^k\|$ is monotone in $k$.
This could be an efficient ad hoc way to obtain $\theta_1$ when the matrix $A$ is implicitly defined as in the examples in Section \ref{sec44}.

\bibliographystyle{plain}
\bibliography{basispursuit}

\begin{thebibliography}{10}

\bibitem{Borwein}
Francisco J.~Arag\'{o}n Artacho and Jonathan~M. Borwein.
\newblock Global convergence of a non-convex {D}ouglas-{R}achford iteration.
\newblock {\em Journal of Global Optimization}, pages 1--17, 2012.

\bibitem{Bauschke:02}
Heinz~H. Bauschke, Patrick~L. Combettes, and D.~Russell Luke.
\newblock Phase retrieval, error reduction algorithm, and fienup variants: a
  view from convex optimization.
\newblock {\em J. Opt. Soc. Am. A}, 19(7):1334--1345, Jul 2002.

\bibitem{Bjoerck:1971:NMC:891913}
\r{A}ke Bj\"{o}rck and Gene~H. Golub.
\newblock {Numerical Methods for Computing Angles Between Linear Subspaces}.
\newblock {\em Mathematics of Computation}, 27(123), 1973.

\bibitem{CDDY06}
E.~Cand\`{e}s, L.~Demanet, D.~Donoho, and L.~Ying.
\newblock {Fast discrete curvelet transforms}.
\newblock {\em Multiscale Modeling Simulation}, 5(3):861--899, 2006.

\bibitem{1542412}
E.J. Cand\`{e}s and T.~Tao.
\newblock Decoding by linear programming.
\newblock {\em Information Theory, IEEE Transactions on}, 51(12):4203 -- 4215,
  dec. 2005.

\bibitem{Chambolle:2011:FPA:1968993.1969036}
Antonin Chambolle and Thomas Pock.
\newblock A first-order primal-dual algorithm for convex problems with
  applications to imaging.
\newblock {\em J. Math. Imaging Vis.}, 40(1):120--145, May 2011.

\bibitem{Chen98atomicdecomposition}
Scott~Shaobing Chen, David~L. Donoho, Michael, and A.~Saunders.
\newblock Atomic decomposition by basis pursuit.
\newblock {\em SIAM Journal on Scientific Computing}, 20:33--61, 1998.

\bibitem{Combettes04solvingmonotone}
Patrick~L. Combettes.
\newblock Solving monotone inclusions via compositions of nonexpansive averaged
  operators.
\newblock {\em Optimization}, 53:475--504, 2004.

\bibitem{combettes2009iterative}
Patrick~L Combettes.
\newblock Iterative construction of the resolvent of a sum of maximal monotone
  operators.
\newblock {\em J. Convex Anal}, 16(4):727--748, 2009.

\bibitem{Eckstein92onthe}
Jonathan Eckstein and Dimitri~P. Bertsekas.
\newblock On the {D}ouglas-{R}achford splitting method and the proximal point
  algorithm for maximal monotone operators.
\newblock {\em Mathematical Programming}, 55:293--318, 1992.

\bibitem{Esser09}
E.~Esser.
\newblock Applications of {L}agrangian based alternating direction methods and
  connections to split {B}regman.
\newblock CAM Report 09-31, UCLA, 2009.

\bibitem{1302316}
J.-J. Fuchs.
\newblock On sparse representations in arbitrary redundant bases.
\newblock {\em Information Theory, IEEE Transactions on}, 50(6):1341 -- 1344,
  june 2004.

\bibitem{gabay83}
D.~Gabay.
\newblock Applications of the method of multipliers to variational
  inequalities.
\newblock {\em Augmented Lagrangian Methods: Applications to the Solution of
  Boundary-Value Problems edited by M. FORTIN and R. GLOWINSKI}, 1983.

\bibitem{gabay76}
D.~Gabay and B.~Mercier.
\newblock {A dual algorithm for the solution of nonlinear variational problems
  via finite element approximation}.
\newblock {\em Comput. Math. Appl.}, 2(1):17--40, January 1976.

\bibitem{GlowinskiMarroco}
R.~Glowinski and A.~Marroco.
\newblock Sur l'approximation, par elements finis d'ordre un, et la resolution,
  par penalisation-dualit\'{e} d'une classe de problemes de dirichlet non
  lineares.
\newblock {\em Revue Fran\c caise d'Automatique, Informatique et Recherche
  Op\'{e}rationelle}, 9:41--76, 1975.

\bibitem{Goldstein:2009:SBM:1658384.1658386}
Tom Goldstein and Stanley Osher.
\newblock The split {B}regman method for {L}1-regularized problems.
\newblock {\em SIAM J. Img. Sci.}, 2(2):323--343, April 2009.

\bibitem{hale2008fixed}
Elaine~T Hale, Wotao Yin, and Yin Zhang.
\newblock Fixed-point continuation for $\backslash$ell\_1-minimization:
  Methodology and convergence.
\newblock {\em SIAM Journal on Optimization}, 19(3):1107--1130, 2008.

\bibitem{han2012convergence}
Deren Han and Xiaoming Yuan.
\newblock Convergence analysis of the peaceman-rachford splitting method for
  nonsmooth convex optimization.
\newblock 2012.

\bibitem{doi:10.1137/110836936}
B.~He and X.~Yuan.
\newblock On the {O}$(1/n)$ convergence rate of the {D}ouglas–{R}achford
  alternating direction method.
\newblock {\em SIAM Journal on Numerical Analysis}, 50(2):700--709, 2012.

\bibitem{He12onnon-ergodic}
Bingsheng He and Xiaoming Yuan.
\newblock On non-ergodic convergence rate of {D}ouglas-{R}achford alternating
  direction method of multipliers.
\newblock {\em preprint}, 2012.

\bibitem{GJI:GJI3698}
Felix~J. Herrmann and Gilles Hennenfent.
\newblock Non-parametric seismic data recovery with curvelet frames.
\newblock {\em Geophysical Journal International}, 173(1):233--248, 2008.

\bibitem{HesseLuke}
Robert Hesse and D.~Russell Luke.
\newblock Nonconvex notions of regularity and convergence of fundamental
  algorithms for feasibility problems.
\newblock {\em preprint}.

\bibitem{yin2}
M.-J. Lai and W.~Yin.
\newblock Augmented l1 and nuclear-norm models with a globally linearly
  convergent algorithm.
\newblock Technical report, Rice University CAAM, 2012.

\bibitem{Lewis:2002:ASN:588908.589340}
A.~S. Lewis.
\newblock Active sets, nonsmoothness, and sensitivity.
\newblock {\em SIAM J. on Optimization}, 13(3):702--725, August 2002.

\bibitem{lions}
P.~L. Lions and B.~Mercier.
\newblock Splitting algorithms for the sum of two nonlinear operators.
\newblock {\em SIAM J. Numer. Anal.}, 16:964--979, 1979.

\bibitem{Setzer:2009:SBA:1567735.1567776}
Simon Setzer.
\newblock Split {B}regman algorithm, {D}ouglas-{R}achford splitting and frame
  shrinkage.
\newblock In {\em Proceedings of the Second International Conference on Scale
  Space and Variational Methods in Computer Vision}, SSVM '09, pages 464--476,
  Berlin, Heidelberg, 2009. Springer-Verlag.

\bibitem{LBSB}
Yi~Yang, Michael Moller, and Stanley Osher.
\newblock A dual split {B}regman method for fast $\ell^1$ minimization.
\newblock {\em Mathematics of Computation, to appear}.

\bibitem{Yin:2010:AGL:2078411.2078424}
Wotao Yin.
\newblock Analysis and generalizations of the linearized {B}regman method.
\newblock {\em SIAM J. Img. Sci.}, 3(4):856--877, October 2010.

\bibitem{Yin2013}
Wotao Yin and Stanley Osher.
\newblock Error forgetting of bregman iteration.
\newblock {\em Journal of Scientific Computing}, 54(2-3):684--695, 2013.

\bibitem{yin}
Hui Zhang, Wotao Yin, and Lizhi Cheng.
\newblock Necessary and sufficient conditions of solution uniqueness in $\ell$1
  minimization.
\newblock Technical report, Rice University CAAM, 2012.

\end{thebibliography}

\end{document}